\documentclass[a4paper, 10pt]{amsart}

\textwidth16cm \textheight21cm \oddsidemargin-0.1cm
\evensidemargin-0.1cm

\usepackage{amsmath,amssymb, hyperref, color}

\newtheorem{theorem}{Theorem}[section]
\newtheorem{lemma}[theorem]{Lemma}

\newtheorem{proposition}[theorem]{Proposition}

\theoremstyle{definition}

\newcommand{\N}{\mathbb N}

 \DeclareMathOperator{\ord}{ord}

 \DeclareMathOperator{\supp}{supp}

\renewcommand{\t}{\, | \,}

\newcommand{\be}{\begin{equation}}
\newcommand{\ee}{\end{equation}}

\newcommand{\bnml}{\begin{multline}}
\newcommand{\enml}{\end{multline}}
\newcommand{\buml}{\begin{multline*}}
\newcommand{\euml}{\end{multline*}}

\newcommand{\ber}{\begin{eqnarray}}
\newcommand{\eer}{\end{eqnarray}}

\numberwithin{equation}{section}

\begin{document}

\title{A Characterization of class groups  via sets of lengths {II}}

\address{University of Graz, NAWI Graz \\
Institute for Mathematics and Scientific Computing \\
Heinrichstra{\ss}e 36\\
8010 Graz, Austria}

\email{alfred.geroldinger@uni-graz.at, qinghai.zhong@uni-graz.at}

\author{Alfred Geroldinger  and Qinghai Zhong}

\thanks{This work was supported by
the Austrian Science Fund FWF, Project Number M1641-N26.}

\keywords{Krull monoids,   maximal orders, seminormal orders; class groups,   arithmetical characterizations, sets of lengths, zero-sum sequences, Davenport constant}

\subjclass[2010]{11B30, 11R27, 13A05, 13F05, 20M13}

\begin{abstract}
Let $H$ be a Krull monoid with finite class group $G$ and suppose that every class contains a prime divisor. If an element $a \in H$ has a factorization $a=u_1 \cdot \ldots \cdot u_k$ into irreducible elements $u_1, \ldots, u_k \in H$, then $k$ is called the length of the factorization and the set $\mathsf L (a)$ of all possible factorization lengths is the set of lengths of $a$. It is classical that the system $\mathcal L (H) = \{ \mathsf L (a) \mid a \in H \}$ of all sets of lengths depends only on the class group $G$, and a standing conjecture states that conversely the system $\mathcal L (H)$ is characteristic for the class group. We verify the conjecture if the class group is isomorphic to $C_n^r$ with $r,n \ge 2$ and $r \le \max \{2, (n+2)/6\}$. Indeed, let $H'$ be a further Krull monoid with class group $G'$ such that every class contains a prime divisor and suppose that $\mathcal L (H)= \mathcal L (H')$. We prove that, if one of the groups $G$ and $G'$ is isomorphic to $C_n^r$ with $r,n$ as above, then $G$ and $G'$ are isomorphic (apart from two well-known pairings).
\end{abstract}

\maketitle

\medskip
\section{Introduction and Main Result} \label{1}
\medskip

Let $H$ be a Krull monoid with class group $G$ and suppose that every class contains a prime divisor (holomorphy rings in global fields are such Krull monoids and more examples will be given in Section \ref{2}). Then every nonunit $a \in H$ can be written as a product of irreducible elements, say $a = u_1 \cdot \ldots \cdot u_k$, and the number of factors $k$ is called the length of the factorization. The set $\mathsf L (a)$ of all possible factorization lengths is the set of lengths of $a$, and $\mathcal L (H) = \{ \mathsf L (a) \mid a \in H \}$ is called the system of sets of lengths of $H$ (for convenience we set $\mathsf L (a) = \{0\}$ if $a$ is an invertible element of $H$). It is easy to check that all sets of lengths are finite and, by definition of the class group, we observe that
$H$ is factorial if and only if $|G|=1$. By a result due to  Carlitz in 1960, we know that $H$ is half-factorial (i.e., $|L|=1$ for all $L \in \mathcal L (H)$) if and only if $|G| \le 2$.

Suppose that $|G| \ge 3$. Then there is some $a \in H$ with $|\mathsf L (a)|>1$. If $k, \ell \in \mathsf L (a)$ with $k < \ell$ and $m \in \N$, then $\mathsf L (a^m) \supset \{km + \nu (\ell-k) \mid \nu \in [0,m] \}$ which shows that sets of lengths can become arbitrarily large.
 The monoid $\mathcal B (G)$ of zero-sum sequences over $G$ is again a Krull monoid with class group isomorphic to $G$, every class contains a prime divisors, and the systems of sets of lengths of $H$ and that of $\mathcal B (G)$ coincide. Thus $\mathcal L (H) = \mathcal L \big(B (G) \big)$, and it is usual to set $\mathcal L (G) := \mathcal L \big( \mathcal B (G) \big)$. In particular, the system of sets of lengths of $H$ depends only on the class group $G$. The associated inverse question asks whether or not sets of lengths are characteristic for the class group.
More precisely, the Characterization Problem for class groups can be formulated as follows (see \cite[Section 7.3]{Ge-HK06a}, \cite[page 42]{Ge-Ru09}, \cite{Sc09b}, and Proposition \ref{2.1})).

\begin{enumerate}
\item[]
Given two finite abelian groups $G$ and $G'$ such that $\mathcal L(G) = \mathcal L(G')$.
Does it follow that $G \cong G'$?
\end{enumerate}

The system of sets of lengths $\mathcal L (G)$ is studied with methods from additive combinatorics. In particular, zero-sum theoretical invariants (such as the Davenport constant or the cross number) and the associated inverse problems play a crucial role.
Most of these invariants are well-understood only in a very limited number of cases (e.g., for groups of rank two, the precise value of the Davenport constant $\mathsf D (G)$ is known and the associated inverse problem is solved; however, if $n$ is not a prime power and $r \ge 3$, then the value of the Davenport constant $\mathsf D (C_n^r)$ is unknown). Thus it is not surprising that affirmative answers to the Characterization Problem so far have been restricted to those groups where we have a good understanding of the Davenport constant. These groups include elementary $2$-groups, cyclic groups, and groups of rank two  (the latter were recently handled in \cite{Ge-Sc15b}; for a variety of partial results we refer to \cite{Sc05d,Sc11b, Sc09c}).

The goal of the present note is to solve the Characterization Problem for groups of the form $C_n^r$ if the exponent is large with respect to the rank. Here is our main theorem.

\smallskip
\begin{theorem} \label{1.1}
Let $G$ be an abelian group such that $\mathcal L (G) = \mathcal L ( C_n^r)$ where $r, n \in \N$ with $n \ge 2$, $(n,r)\notin \{(2,1),(2,2), (3,1)\}$, and $r \le \max \{2, (n+2)/6 \}$. Then $G \cong C_n^r$.
\end{theorem}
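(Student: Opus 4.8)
The plan is to extract a small list of invariants that are determined by the abstract system $\mathcal L(G)$ alone, to evaluate them on the reference group $C_n^r$, and then to upgrade this coarse information to an isomorphism. First I would reduce to $G$ finite: an infinite abelian group realizes \emph{every} finite subset of $\N_{\ge 2}$ as a set of lengths, so its elasticity is infinite, whereas $\mathcal L(C_n^r)$ has finite elasticity; thus $\mathcal L(G)=\mathcal L(C_n^r)$ forces $|G|<\infty$, and $|G|\ge 3$ because $(n,r)\notin\{(2,1),(2,2)\}$ rules out the half-factorial case. For such $G$ the elasticity is $\rho(G)=\mathsf D(G)/2$, and more is true: the refined elasticities satisfy $\rho_{2k}(G)=k\,\mathsf D(G)$ and are read directly off $\mathcal L(G)$. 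Hence
\[
\mathsf D(G)=\mathsf D(C_n^r)=1+r(n-1),
\]
the last equality being the value of the Davenport constant for the groups in question.

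Next I would bring in the set of distances. The full set $\Delta(G)$ is visibly determined by $\mathcal L(G)$, and, by the structure theory of sets of lengths, so is the largest minimal distance $\max\Delta^*(G)$, for which one has the formula $\max\Delta^*(G)=\max\{\exp(G)-2,\ \mathsf r(G)-1\}$. Since the rank hypothesis $r\le(n+2)/6$ gives $r-1\le n-2$, the reference group satisfies $\max\Delta^*(C_n^r)=\max\{n-2,\,r-1\}=n-2$, and therefore
\[
\max\{\exp(G)-2,\ \mathsf r(G)-1\}=n-2.
\]
The difficulty is that these two pieces of data do \emph{not} by themselves determine the group. For instance $C_3^{\,n-1}$ is a $3$-group, so $\mathsf D(C_3^{\,n-1})=1+2(n-1)=2n-1$ and $\max\Delta^*(C_3^{\,n-1})=\max\{1,\,n-2\}=n-2$; it thus shares both invariants with $C_n^2$. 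Ruling out such high-rank, small-exponent impostors is exactly the heart of the matter.

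The decisive step --- the analogue of the paper's Main Lemma --- is therefore a finer analysis of the actual sets of lengths, going beyond $\mathsf D$ and $\max\Delta^*$ to the detailed arithmetic of $\Delta^*(G)$ and of sets of lengths of products of few atoms. The aim is to show, using the full force of $r\le(n+2)/6$, that the rogue branch $\exp(G)<n$ (which forces $\mathsf r(G)=n-1$) cannot reproduce $\mathcal L(C_n^r)$, so that $\exp(G)=n$; and then, writing $G=C_{n_1}\oplus\dots\oplus C_{n_s}$ with $n_1\mid\dots\mid n_s=n$ and $s=\mathsf r(G)$, to combine $\mathsf D(G)\ge 1+\sum_{i}(n_i-1)$ with the finer invariants to force $s=r$ and $n_1=\dots=n_s=n$. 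Once $\exp(G)=n$, $\mathsf r(G)=r$ and $|G|=n^r$ are known, the elementary structure theory of finite abelian groups gives $G\cong C_n^r$.

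I expect the main obstacle to be precisely this separation of $C_n^r$ from groups of large rank and small exponent, which agree with it on every coarse length-invariant; the quantitative comparison that defeats them is what consumes the hypothesis $r\le(n+2)/6$, and the two unavoidable coincidences $\mathcal L(C_1)=\mathcal L(C_2)$ and $\mathcal L(C_3)=\mathcal L(C_2^2)$ are exactly what forces the exclusion of the pairs $(2,1)$, $(2,2)$ and $(3,1)$.
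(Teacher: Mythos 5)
Your reduction to $G$ finite and the recovery of $\mathsf D(G)=\mathsf D(C_n^r)$ and $\max\Delta^*(G)=\max\Delta^*(C_n^r)$ from $\mathcal L$ are fine, and you correctly locate the difficulty in the high-rank, small-exponent competitors. But there are two genuine gaps. First, you assert $\mathsf D(C_n^r)=1+r(n-1)$. This is an open problem for $r\ge 3$ when $n$ is not a prime power (the value is known only for $p$-groups and for rank at most two), and the paper is explicit that these are the first groups for which the characterization is obtained \emph{without} knowing the Davenport constant. Your closing step, which combines $\mathsf D(G)\ge 1+\sum_i(n_i-1)$ with the numerical value $1+r(n-1)$ to force $s=r$ and $n_1=\dots=n_s=n$, therefore rests on an unproven identity. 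The paper avoids this entirely: it uses only strict monotonicity of $\mathsf D$ under subgroups (if $k$ is maximal with $C_n^k$ embedding in $G$ and $k\ge r$, then $\mathsf D(C_n^r)=\mathsf D(G)\ge\mathsf D(C_n^k)$ already forces $G\cong C_n^r$), never the value itself.

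Second, the step you yourself call ``the decisive step'' is not carried out; it is only announced as an aim. Ruling out the branch with $\exp(G)<n$ and $\mathsf r(G)$ large is the entire content of the paper: it requires the new upper bound $\mathsf m(G)\le\min\{\frac n2+r-2,\max\{r-1,\frac{5n}6-4,\frac{n+r-3}2\}\}$ for LCN-sets (Proposition~\ref{3.7}, which in turn rests on the analysis of minimal non-half-factorial sets in Section~\ref{3}), fed into the two-sided sandwich
\[
[1,\mathsf r(G)-1]\cup[\max\{1,n-k-1\},n-2]\ \subset\ \Delta_1(G)\ \subset\ \big[1,\max\{\mathsf m(G),\lfloor n/2\rfloor-1\}\big]\cup[\max\{1,n-k-1\},n-2]
\]
of Proposition~\ref{2.2}.3. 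One further detail to watch: the invariant actually transferred by $\mathcal L(G)=\mathcal L(G')$ is $\Delta_1$, not $\Delta^*$ (only the containments $\Delta^*(G)\subset\Delta_1(G)\subset\{d_1 : d_1 \text{ divides some } d\in\Delta^*(G)\}$ are available), so ``the detailed arithmetic of $\Delta^*(G)$'' must be routed through $\Delta_1$. The concrete mechanism in the paper --- a gap in $\Delta_1(C_n^r)$ at $n-2r$ produced by the bound $\mathsf m(C_n^r)\le\frac n2+r-2\le n-2r-1$, against the interval $[1,n-2r]\subset[1,\mathsf r(G)-1]\subset\Delta_1(G)$ forced by $\mathsf r(G)\ge n-2r+1$ on the rogue branch --- is exactly the quantitative comparison you say should exist, but your proposal contains no argument for it, and this is where the hypothesis $r\le(n+2)/6$ is actually consumed.
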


The groups $C_n^r$, where $r, n$ are as above, are the first groups at all for which the Characterization Problem is solved whereas the Davenport constant is unknown. This is made possible by a detailed study of the set of minimal distances $\Delta^* (G) = \{ \min \Delta (G_0) \mid G_0 \subset G \ \text{is a non-half-factorial subset} \}$
and the associated minimal non-half-factorial subsets. Sets of minimal distances have been investigated by Chapman, Grynkiewicz, Hamidoune, Plagne, Schmid,  Smith, and others (see \cite[Section 6.8]{Ge-HK06a} for some basic information and \cite{Ge-Ha02, Pl-Sc05b, Ch-Sc-Sm08b, Sc09c, B-G-G-P13a, Ge-Zh15a, Pl-Sc16a} for recent progress).
 In Section \ref{2} we repeat some key facts on Krull monoids and gather the required machinery, and in Section \ref{3} we study structural properties of (large) minimal non-half-factorial sets.
 The proof of Theorem \ref{1.1}   will be provided in Section \ref{4} where we also give a positive answer to the Characterization Problem for all groups $G$ with Davenport constant $\mathsf D (G) \in [4,11]$ (Proposition \ref{4.1}).

\medskip
\section{Background on Krull monoids and their sets of minimal distances} \label{2}
\medskip

Our notation and terminology are consistent with \cite{Ge-Sc16a}. We denote by $\mathbb N$ the set of positive integers, and for $a, b \in \mathbb Q$, we denote
by $[a, b ] = \{ x \in \mathbb Z \mid a \le x \le b\}$ the discrete, finite interval between $a$ and $b$. If $A, B \subset \mathbb Z$ are subsets of the integers, then $A+B = \{a+b \mid a \in A, b \in B\}$ denotes their {\it sumset}, and  $\Delta (A)$ the {\it set of $($successive$)$ distances} of $A$ (that is, $d \in \Delta (A)$ if and only if $d = b-a$ with $a, b \in A$ distinct and $[a, b] \cap A = \{a, b\}$). Let \ $d, l  \in \mathbb N$ \ and \ $M \in \mathbb N_0$. A subset \ $L
\subset \mathbb Z$ \ is called an {\it almost arithmetical
progression} ({\rm AAP} for short) with {\it difference} $d$, {\it length} $l$, and
{\it bound} $M$ if
\[
L = y + (L' \cup L^* \cup L'') \subset y + d \mathbb Z,
\]
where $y \in \mathbb Z$, $L^* = \{ \nu d \mid \nu \in [0, l] \}$ is
an arithmetical progression with difference $d$ and length $l$, $L'
\subset [-M,-1]$, and $L'' \subset \max L^* + [1, M]$.

By a monoid we mean a commutative semigroup with identity which satisfies the cancellation laws.
A monoid $F$ is called  free abelian with basis $P \subset F$, and we write $F = \mathcal F (P)$, if every $a \in F$ has a unique representation of the form
\[
a = \prod_{p \in P} p^{\mathsf v_p(a) } \quad \text{with} \quad
\mathsf v_p(a) \in \N_0 \ \text{ and } \ \mathsf v_p(a) = 0 \ \text{
for almost all } \ p \in P \,.
\]
A monoid $H$ is said to be a {\it Krull monoid} if it satisfies one of the following two equivalent conditions (see \cite[Theorem 2.4.8]{Ge-HK06a}).
\begin{itemize}
\item[(a)] $H$ is $v$-noetherian and completely integrally closed.

\item[(b)] There exists a monoid homomorphism $\varphi \colon H \to F = \mathcal F (P)$ into a free abelian monoid $F$ such that $a \t b$ in $H$ if and only if $\varphi (a) \t \varphi (b)$ in $F$.
\end{itemize}
 Rings of integers, holomorphy rings in algebraic function fields, and regular congruence monoids in these domains are Krull monoids with finite class group such that every class contains a prime divisor (\cite[Section 2.11 and Examples 7.4.2]{Ge-HK06a}). Monoid domains and power series domains that are Krull are discussed in \cite{Ki-Pa01, Ch11a}, and note that every class of a  Krull monoid domain contains a prime divisor. For monoids of modules that are Krull and their distribution of prime divisors, we refer the reader to \cite{Fa06a, Ba-Ge14b}.

Sets of lengths in Krull monoids can be studied in the monoid of zero-sum sequences over its class group. Let $G$ be an additively written abelian group and $G_0 \subset G$ a subset. An element
\[
S = g_1 \cdot \ldots \cdot g_{\ell} = \prod_{g \in G_0} g^{\mathsf v_g (S)} \in \mathcal F (G_0)
\]
 is called a sequence over $G_0$, and we use all notations as in \cite{Gr13a}. In particular,  $\sigma (S) = g_1+ \ldots + g_{\ell}$
denotes the sum,  $|S| = \ell$ the length, $\mathsf h (S) = \max \{ \mathsf v_g (S) \mid g \in G_0 \}$  the maximal multiplicity, $\supp (S) = \{g_1, \ldots, g_{\ell}\} \subset G_0$ the support, and $\mathsf k (S) = \sum_{i=1}^{\ell} 1/\ord (g_i)$ the cross number of $S$. The monoid
\[
\mathcal B (G_0) = \{ S \in \mathcal F (G_0) \mid \sigma (S) = 0 \}
\]
is the monoid of zero-sum sequences over $G_0$, and since the embedding $\mathcal B (G_0) \hookrightarrow \mathcal F (G_0)$ satisfies Condition (b) above, $\mathcal B (G_0)$ is a Krull monoid. As usual, we write $\mathcal L (G_0) = \mathcal L \big( \mathcal B (G_0) \big)$ for the system of sets of lengths of $\mathcal B (G_0)$ and $\mathcal A (G_0) = \mathcal A (\mathcal B (G_0))$ for the set of atoms (the set of irreducible elements) of $\mathcal B (G_0)$. Note that the atoms of $\mathcal B (G_0)$ are precisely the minimal zero-sum sequences over $G_0$, and
\[
\mathsf D (G_0) = \sup \{ |U| \mid U \in \mathcal A (G_0) \} \in \mathbb N \cup \{\infty\}
\]
is the {\it Davenport constant} of $G_0$. The significance of the system of sets of lengths $\mathcal L (G)$ (and hence of the Characterization Problem in the formulation given in the Introduction) stems from its universal role which can be seen from the following proposition.

\medskip
\begin{proposition} \label{2.1}~

\begin{enumerate}
\item If $H$ is a Krull monoid with class group $G$ such that each class contains a prime divisor, then $\mathcal L (H) = \mathcal L  (G) $.

\item Let $\mathcal O$ be a holomorphy ring in a global field $K$, $A$ a central simple algebra over $K$, and $H$ a classical maximal $\mathcal O$-order of $A$ such that every stably free left $R$-ideal is free. Then $\mathcal L (H) = \mathcal L      (G) $, where  $G$ is a ray class group of $\mathcal O$ and hence finite abelian.

\item Let $H$ be a seminormal order in a holomorphy ring of a global field with principal order $\widehat H$ such that the natural map $\mathfrak X (\widehat H) \to \mathfrak X (H)$ is bijective and there is an isomorphism $\overline{\vartheta}\colon \mathcal{C}_v(H)\rightarrow \mathcal{C}_v(\widehat{H})$ between the $v$-class groups. Then $\mathcal L (H) = \mathcal L     (G)$, where $G = \mathcal{C}_v(H)$ is  finite abelian.
\end{enumerate}
\end{proposition}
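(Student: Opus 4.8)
The three assertions share a single engine, the theory of \emph{transfer homomorphisms}. Recall that a monoid homomorphism $\theta \colon H \to B$ is a transfer homomorphism if $B = \theta(H)\,B^\times$ with $\theta^{-1}(B^\times) = H^\times$, and whenever $\theta(u) = bc$ for $u \in H$ and $b, c \in B$ there is a factorization $u = vw$ in $H$ with $\theta(v)$ and $\theta(w)$ associated to $b$ and $c$ respectively. Such a map satisfies $\mathsf L_H(a) = \mathsf L_B\big(\theta(a)\big)$ for every $a$, and hence $\mathcal L(H) = \mathcal L(B)$. The plan is therefore to produce, in each of the three settings, a transfer homomorphism from $H$ (or from its monoid of cancellative elements) onto $\mathcal B(G)$ for the group $G$ named in the statement; equality of the two systems of sets of lengths then follows at once.

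For part (1) I would construct the classical block homomorphism. Since $H$ is Krull, property (b) gives a divisor homomorphism $\varphi \colon H \to \mathcal F(P)$ whose class group is $G$; sending each prime $p \in P$ to its class $[p] \in G$ and extending multiplicatively yields $\beta \colon H \to \mathcal B(G)$, $\beta(a) = \prod_{p \in P} [p]^{\mathsf v_p(\varphi(a))}$, which indeed lands in $\mathcal B(G)$ because the image of any $a \in H$ is a zero-sum sequence. The hypothesis that every class contains a prime divisor gives surjectivity, and the two defining conditions of a transfer homomorphism are verified directly from (b). This is routine and delivers $\mathcal L(H) = \mathcal L(\mathcal B(G)) = \mathcal L(G)$.

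Parts (2) and (3) reach the same target $\mathcal B(G)$, but the construction lies outside the commutative Krull world and is substantially deeper. In (2) the order $H$ is noncommutative, so I would invoke the arithmetic of classical maximal orders in central simple algebras: there I would pass to the monoid of cancellative (regular) elements and use that, under the stably-free-implies-free hypothesis, it admits a transfer homomorphism onto $\mathcal B(G)$ with $G$ the pertinent ray class group of $\mathcal O$. The stably-free hypothesis is exactly what forces the relevant class structure to collapse onto this abelian ray class group. In (3) the order $H$ is only seminormal and hence not Krull; here I would compare $H$ with its principal order $\widehat H$, which is Krull with class group $\mathcal C_v(\widehat H)$. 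The bijectivity of $\mathfrak X(\widehat H) \to \mathfrak X(H)$ together with the isomorphism $\overline{\vartheta} \colon \mathcal C_v(H) \to \mathcal C_v(\widehat H)$ are precisely what permit factorizations to be transported between $H$ and $\widehat H$ without altering lengths, yielding a transfer homomorphism $H \to \mathcal B(G)$ with $G = \mathcal C_v(H)$.

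The main obstacle is concentrated in (2) and (3): building a transfer homomorphism when the ambient monoid is either noncommutative or non-normal. In the noncommutative case one must control the interplay between one-sided and two-sided ideals and show that factorization lengths are governed solely by the abelian ray class group — this is where the stably-free hypothesis is indispensable. In the seminormal case the inclusion $H \hookrightarrow \widehat H$ is not length-preserving in general, and the two spectral and class-group hypotheses must be used to rule out any creation or loss of factorization lengths at the conductor. By contrast part (1) is entirely standard, and once the transfer homomorphisms are in hand all three conclusions are immediate.
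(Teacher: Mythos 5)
Your proposal takes essentially the same route as the paper: the paper's proof of this proposition consists entirely of citations to the literature, and those citations point to exactly the transfer-homomorphism constructions you describe --- the block homomorphism of \cite[Section 3.4]{Ge-HK06a} for part (1), Smertnig's transfer result for classical maximal orders \cite[Theorem 1.1]{Sm13a} for part (2), and the transfer results for seminormal (weakly Krull) orders in \cite[Theorem 5.8]{Ge-Ka-Re15a} for part (3). Your sketch of part (1) is complete and standard; for parts (2) and (3) your text correctly identifies the strategy but, like the paper itself, defers the substantial work to those external theorems rather than reproving them.
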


\begin{proof}
1. See \cite[Section 3.4]{Ge-HK06a}.

2. See \cite[Theorem 1.1]{Sm13a}, and \cite{Ba-Sm15} for related results of this flavor.

3. See \cite[Theorem 5.8]{Ge-Ka-Re15a} for a more general result in the setting of weakly Krull monoids.
\end{proof}

\smallskip
Next we discuss sets of distances and minimal sets of distances. Let
\[
\Delta (G_0) = \bigcup_{L \in \mathcal L (G_0)} \Delta (L) \ \subset \N
\]
denote the {\it set of distances} of $G_0$. Then $G_0$ is called half-factorial if $\Delta (G_0) = \emptyset$. Otherwise, $G_0$ is called non-half-factorial and we have $\min \Delta (G_0) = \gcd \Delta (G_0)$. Note that $G_0$ is half-factorial if and only if $\mathsf k (A)=1$ for all $A \in \mathcal A (G_0)$. Furthermore, the set $G_0$ is called
\begin{itemize}
\item minimal non-half-factorial if it is half-factorial and every proper subset $G_1 \subsetneq G_0$ is half-factorial.

\item an LCN-set if $\mathsf k (A) \ge 1$ for all $A \in \mathcal A (G_0)$.
\end{itemize}
The set $\Delta (G)$ is an interval and its maximum is studied in \cite{Ge-Gr-Sc11a}. The following two subsets of $\Delta (G)$, the {\it set of minimal distances} $\Delta^* (G)$ and the set $\Delta_1 (G)$, play a crucial role in the present paper. We define    \[
\begin{aligned}
\Delta^* (G) & = \{\min \Delta (G_0) \mid G_0 \subset G \ \text{with} \ \Delta (G_0) \ne \emptyset \} \subset \Delta (G) \,, \\
\mathsf m (G) & = \max \{ \min \Delta (G_0) \mid G_0 \subset G \ \text{is an LCN-set with} \ \Delta (G_0) \ne \emptyset \} \,,
\end{aligned}
\]
and we denote by $\Delta_1 (G)$ the set of all $d \in \N$ with the following property:
\begin{itemize}
\item[]  For every $k \in \N$, there exists some $L \in \mathcal L (G)$  which is an AAP with difference $d$ and length $l \ge k$.
\end{itemize}
Thus, by definition, if $G'$ is a further finite abelian group such that $\mathcal L (G) = \mathcal L (G')$, then $\Delta_1 (G) = \Delta_1 (G')$.
The next proposition gathers the  properties of $\Delta^* (G)$ and of $\Delta_1 (G)$ which are needed in the sequel.

\medskip
\begin{proposition} \label{2.2}
Let $G$ be a finite abelian group with $|G|\ge 3$ and $\exp (G)=n$.
\begin{enumerate}
\item $\Delta^* (G) \subset \Delta_1 (G) \subset \{ d_1 \in \Delta (G) \mid d_1 \ \text{divides some } \ d \in \Delta^* (G)\}$. In particular, $\max \Delta^* (G) = \max \Delta_1 (G)$.

\smallskip
\item $\max \Delta^* (G) = \max \{ \exp (G)-2, \mathsf m (G) \} = \max \{ \exp (G)-2, \mathsf r (G)-1\}$. If $G$ is a $p$-group, then $\mathsf m (G)=\mathsf r (G)-1$.

\smallskip
\item If $k\in \N$ is maximal such that $G$ has a subgroup isomorphic to $C_{n}^k$, then
      \[
      \Delta_1(G)\subset [1,\max \{ \mathsf m(G), \lfloor \frac{n}{2}\rfloor-1 \}]  \cup [\max \{ 1,n-k-1 \} ,n-2]\,.
      \]
      and
      \[
      [1,\mathsf r(G)-1]  \cup  [\max \{ 1,n-k-1\} ,n-2] \subset \Delta_1(G) \,.
      \]
\end{enumerate}
\end{proposition}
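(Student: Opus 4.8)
The plan is to assemble the three parts from the structure theory of sets of lengths and from the known evaluation of $\max \Delta^*(G)$; the proposition is a compilation, so the real work is to cite the correct results and to reconcile the two descriptions of the maximum. I would treat Part 1 first, proving the two inclusions separately. For $\Delta^*(G) \subset \Delta_1(G)$, fix $d = \min \Delta(G_0) = \gcd \Delta (G_0)$ for some non-half-factorial $G_0 \subset G$. Since $\mathcal B (G_0)$ is a divisor-closed submonoid of $\mathcal B (G)$, we have $\mathcal L (G_0) \subset \mathcal L (G)$; choosing $B \in \mathcal B (G_0)$ with two lengths differing by exactly $d$ and passing to powers $B^m$ via the inclusion $\mathsf L (B^m) \supseteq \mathsf L (B) + \dots + \mathsf L (B)$ ($m$ summands) produces, for every $k$, a set in $\mathcal L (G)$ containing an arithmetical progression of difference $d$ and length $\ge k$; these are AAPs of difference $d$, so $d \in \Delta_1(G)$. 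For $\Delta_1 (G) \subset \{ d_1 \mid d_1 \text{ divides some } d \in \Delta^*(G)\}$ I would invoke the Structure Theorem for Sets of Lengths (\cite{Ge-HK06a}): every $L \in \mathcal L (G)$ is an AAP whose underlying progression has difference in $\Delta^*(G)$. If $d_1 \in \Delta_1(G)$, then for large $k$ some $L \in \mathcal L (G)$ is simultaneously an AAP of difference $d_1$ and length $\ge k$ and an AAP of difference $d \in \Delta^*(G)$; comparing the two long underlying progressions forces $d_1 \mid d$. The ``in particular'' is then immediate, since a divisor does not exceed the number it divides.

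For Part 2, the lower bound $\max \Delta^*(G) \ge \exp(G) - 2$ is witnessed by $G_0 = \{ g, -g \}$ with $\ord (g) = n = \exp (G)$: a short computation shows $\mathsf L (g^n (-g)^n) = \{2, n\}$, hence $\min \Delta (G_0) = n - 2$. The bound $\max \Delta^* (G) \ge \mathsf r (G) - 1$ comes from an LCN-set built on an independent generating family, which realizes the minimal distance $\mathsf r (G) - 1$. The matching upper bound $\min \Delta (G_0) \le \max \{ \exp(G) - 2, \mathsf m (G) \}$ for every non-half-factorial $G_0$, the evaluation $\mathsf m (G) = \mathsf r (G) - 1$ for $p$-groups, and the resulting equality of the two maxima are precisely the content of the cited computation of $\Delta^* (G)$ (\cite{Ge-Gr-Sc11a, Ge-Zh15a}), which I would quote rather than reprove.

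For Part 3 I would again split into the two directions. The lower inclusion $[1, \mathsf r (G) - 1] \cup [\max\{1, n-k-1\}, n-2] \subset \Delta_1(G)$ follows from Part 1 together with explicit families of subsets: independent sets inside $G$ realize every difference in $[1, \mathsf r (G)-1]$, while the maximal $C_n^k$ subgroup realizes the high interval $[\max\{1,n-k-1\}, n-2]$. The upper inclusion is the more delicate direction; it rests on the classification of which differences can arise as AAP-differences and is exactly the recent estimate on $\Delta_1 (G)$ that I would cite directly (\cite{Ge-Zh15a}).

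The hard part is the pair of upper bounds: that no subset $G_0$ yields a minimal distance exceeding $\max\{\exp(G)-2, \mathsf m (G)\}$ (Part 2), and that $\Delta_1 (G)$ leaves the indicated gap between $\max \{ \mathsf m (G), \lfloor n/2 \rfloor - 1 \}$ and $n-k-1$ (Part 3). Both require the fine structural analysis of minimal zero-sum sequences carried out in the cited literature, so my proof would consist of the explicit constructions above for the lower bounds and the reconciliation of the two maxima, with the upper bounds imported as citations.
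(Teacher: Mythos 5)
Your proposal is correct and follows essentially the same route as the paper, which proves this proposition purely by citation (Part 1 from \cite[Corollary 4.3.16]{Ge-HK06a}, Part 2 from \cite{Ge-Zh15a}, Part 3's upper inclusion from \cite[Theorem 3.2]{Sc09c} combined with Part 1, and the lower inclusion from \cite[Propositions 4.1.2 and 6.8.2]{Ge-HK06a}); your unpacking of the standard arguments behind these citations is sound. The one small divergence is in Part 3's upper inclusion, where the paper does not invoke a direct estimate on $\Delta_1(G)$ but instead deduces it from Schmid's containment of $\Delta^*(G)$ in the stated set together with the observation that this set is closed under taking divisors, so that the second inclusion of Part 1 applies.
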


\begin{proof}
1. follows from \cite[Corollary 4.3.16]{Ge-HK06a} and 2. from \cite[Theorem 1.1 and Proposition 3.2]{Ge-Zh15a}.

3. In  \cite[Theorem 3.2]{Sc09c}, it is proved that $\Delta^* (G)$ is contained in the set given above. Since this set contains all its divisors, $\Delta_1 (G)$ is contained in it by 1. The set $[1, \mathsf r (G)-1] \cup  [\max \{ 1,n-k-1\} ,n-2]$ is contained in $\Delta_1 (G)$ by \cite[Propositions 4.1.2 and 6.8.2]{Ge-HK06a}.
\end{proof}

\medskip
\section{Minimal non-half-factorial subsets} \label{3}
\medskip

\centerline{\it Throughout this section, let $G$ be an additive finite abelian group}
\centerline{\it  with $|G| \ge 3$, $\exp (G)=n$, and $\mathsf r (G)=r$.}

\medskip
The following two technical lemmas will be used throughout the manuscript.

\medskip
\begin{lemma} \label{3.1}
Let  $G_0 \subset G$  a subset.
\begin{enumerate}
\item For each $g \in G_0$,
      \[
      \begin{aligned}
      &\gcd \big( \{ \mathsf v_g (B) \mid B \in \mathcal B (G_0) \} \big)  =  \gcd \big( \{ \mathsf v_g (A) \mid A \in \mathcal A (G_0) \} \big)\\ =&
      \min  \big( \{ \mathsf v_g (A) \mid \mathsf v_g (A)>0,  A  \in \mathcal A (G_0) \} \big)   =  \min \big( \{ \mathsf v_g (B) \mid \mathsf v_g (B) > 0, B \in \mathcal B (G_0) \} \big) \\ =& \min  \big( \{ k \in \N  \mid kg \in \langle G_0 \setminus \{g\} \rangle  \} \big)=\gcd \big(  \{ k \in \N  \mid kg \in \langle G_0 \setminus \{g\} \rangle  \} \big) \,.
      \end{aligned}
      \]
In particular, $\min  \big( \{ k \in \N  \mid kg \in \langle G_0 \setminus \{g\} \rangle  \} \big)$ divides $\ord (g)$.

\smallskip
\item Suppose that for any $h\in G_0$, we have that  $h\not\in \langle G_0\setminus \{h,\ h'\}\rangle $ for any $h'\in G_0\setminus \{h\}$.
 Then for any atom $A$ with $\supp(A)\subsetneq G_0$ and any $h \in \supp (A)$, we have $\gcd (\mathsf v_h(A), \ord(h) )>1$.

\smallskip
\item If $G_0$ is minimal non-half-factorial, then there exists a
   minimal  non-half-factorial subset $G_0^* \subset G$ with $|G_0|=|G_0^*|$ and a transfer homomorphism $\theta \colon \mathcal B (G_0) \to \mathcal B (G_0^*)$ such that  the following  properties are satisfied{\rm \,:}
    \begin{enumerate}
    \item   For each $g \in G_0^*$, we have $g \in \langle G_0^* \setminus \{g\} \rangle$.

    \smallskip
    \item  For each $B \in \mathcal B (G_0)$, we have $\mathsf k (B) = \mathsf k \big( \theta (B) \big)$.
  \smallskip
  \item If $G_0^*$ has the property that  for each $h\in G_0^*$, $h\not\in \langle E\rangle$ for any $E\subsetneq G_0^*\setminus \{h\}$, then $G_0$ also has the property.
    \end{enumerate}
\end{enumerate}
\end{lemma}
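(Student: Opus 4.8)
\emph{Part (1).} I would first observe that $\Lambda_g = \{k \in \Z \mid kg \in \langle G_0 \setminus \{g\}\rangle\}$ is a subgroup of $\Z$, being the preimage of the subgroup $\langle G_0 \setminus \{g\}\rangle$ under the homomorphism $\Z \to G$, $k \mapsto kg$; write $\Lambda_g = d\Z$ with $d \in \N$. Since $\ord(g)\,g = 0 \in \langle G_0 \setminus \{g\}\rangle$ we get $d \mid \ord(g)$, which yields the final divisibility assertion, and plainly $\min(\Lambda_g \cap \N) = \gcd(\Lambda_g \cap \N) = d$. The bridge to sequences is the identity $\{\mathsf v_g(B) \mid B \in \mathcal B(G_0),\ \mathsf v_g(B) > 0\} = \Lambda_g \cap \N$: if $B$ is zero-sum then $\mathsf v_g(B)\,g = -\sum_{h \ne g}\mathsf v_h(B)\,h \in \langle G_0 \setminus \{g\}\rangle$, and conversely if $kg \in \langle G_0 \setminus \{g\}\rangle$ then $-kg \in \langle G_0 \setminus \{g\}\rangle$, so after reducing coefficients modulo the orders of the $h \in G_0 \setminus \{g\}$ one writes $-kg = \sum_{h\ne g} n_h h$ with $n_h \in \N_0$, and $B = g^k \prod_{h\ne g} h^{n_h}$ is a zero-sum sequence with $\mathsf v_g(B) = k$. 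This identifies four of the six quantities with $d$. To descend to atoms I take a zero-sum sequence $B$ with $\mathsf v_g(B) = d$ and factor it, $B = A_1 \cdots A_t$; each $\mathsf v_g(A_i) \in \Lambda_g$ is $0$ or $\ge d$ and they sum to $d$, so exactly one $A_i$ has $\mathsf v_g$-value $d$. Hence the minimum over atoms is $\le d$, and the reverse inequality together with the two gcd-statements over atoms follows from $\{\mathsf v_g(A)\} \subset \{\mathsf v_g(B)\}$.

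\emph{Part (2).} Let $A$ be an atom with $\supp(A) \subsetneq G_0$, fix $h \in \supp(A)$, and suppose toward a contradiction that $\gcd(\mathsf v_h(A), \ord(h)) = 1$. With $k = \mathsf v_h(A)$ choose $m$ with $mk \equiv 1 \pmod{\ord(h)}$, so $h = m(kh)$. As $A$ is zero-sum, $kh = -\sum_{h' \in \supp(A)\setminus\{h\}}\mathsf v_{h'}(A)\,h' \in \langle \supp(A)\setminus\{h\}\rangle$, whence $h \in \langle \supp(A)\setminus\{h\}\rangle$. Since $\supp(A) \subsetneq G_0$ I may pick $g \in G_0 \setminus \supp(A)$; then $\supp(A)\setminus\{h\} \subset G_0\setminus\{h,g\}$, so $h \in \langle G_0\setminus\{h,g\}\rangle$, contradicting the hypothesis with $h' = g$. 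Thus $\gcd(\mathsf v_h(A),\ord(h)) > 1$.

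\emph{Part (3).} Set $d_g = \min\{k\in\N \mid kg\in\langle G_0\setminus\{g\}\rangle\}$ as in (1). Minimality forbids $d_g = \ord(g)$: otherwise $\langle g\rangle \cap \langle G_0\setminus\{g\}\rangle = \{0\}$, so $\mathcal B(G_0)$ splits as a direct product and $G_0\setminus\{g\}$ inherits non-half-factoriality, a contradiction. Hence $g^* := d_g g \ne 0$. Put $G_0^* = \{d_g g \mid g \in G_0\}$ and define $\theta\colon \mathcal B(G_0)\to\mathcal B(G_0^*)$ by $\theta(\prod_g g^{\mathsf v_g(B)}) = \prod_g (d_g g)^{\mathsf v_g(B)/d_g}$. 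By (1) each $\mathsf v_g(B)$ is a multiple of $d_g$, so the exponents are integers; since $\sigma(\theta(B)) = \sum_g (\mathsf v_g(B)/d_g)(d_g g) = \sigma(B) = 0$, the map $\theta$ is a surjective monoid homomorphism into $\mathcal B(G_0^*)$, and it is a transfer homomorphism: a factorization $\theta(B) = C_1 C_2$ lifts via $\mathsf v_g(B_i) = d_g\,\mathsf v_{g^*}(C_i)$, which again gives zero-sum sequences with $B_1B_2 = B$. Property (b) is immediate from $\ord(d_g g) = \ord(g)/d_g$, as $\mathsf v_g(B)/\ord(g) = (\mathsf v_g(B)/d_g)/\ord(d_g g)$. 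Transfer homomorphisms preserve sets of lengths, so $\mathcal L(G_0) = \mathcal L(G_0^*)$ and $G_0^*$ is non-half-factorial. The same scaling restricts, for every $E \subset G_0$, to a transfer homomorphism $\mathcal B(E) \to \mathcal B(E^*)$ with $E^* = \{d_g g \mid g\in E\}$, so $E$ is half-factorial iff $E^*$ is; as $G_0^*\setminus\{g^*\} = (G_0\setminus\{g\})^*$ this yields minimality of $G_0^*$. Property (a) follows from (1) applied to $G_0^*$: the $\mathsf v_{g^*}$-values on the atoms $\theta(A)$ are $\mathsf v_g(A)/d_g$, whose gcd is $\gcd\{\mathsf v_g(A)\}/d_g = 1$, so $g^* \in \langle G_0^*\setminus\{g^*\}\rangle$. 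For (c), the stated condition is equivalent to ``$h\notin\langle G_0\setminus\{h,e\}\rangle$ for all distinct $h,e$''; if $G_0$ fails it, say $h\in\langle G_0\setminus\{h,e\}\rangle$, then $d_h = 1$ and $h^* = h$, and applying (1) to $G_0\setminus\{e\}$ with the restricted $\theta$ produces an atom whose $\mathsf v_h$-value is $1$, forcing $h \in \langle G_0^*\setminus\{h^*,e^*\}\rangle$, so $G_0^*$ fails it as well.

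The step I expect to be the genuine obstacle is the equality $|G_0| = |G_0^*|$, that is, the injectivity of $g \mapsto d_g g$ on $G_0$ (all of the correspondences above tacitly use it). A collapse $d_g g = d_{g'}g'$ with $g\ne g'$ yields a relation along which one can transfer the $\mathsf v_g$-multiplicities of a ``bad'' atom (cross number $\ne 1$, which by minimality has full support) from $g$ to $g'$; since half-factoriality of the proper subset $\{g,g'\}$ forces $d_g/\ord(g) = d_{g'}/\ord(g')$, this transfer preserves both the sum and the cross number while moving the atom onto the half-factorial subset $G_0\setminus\{g\}$, the intended contradiction. Making this contradiction airtight in every case, in particular controlling the integer-cross-number situation and the two-element case, is the delicate point of the argument.
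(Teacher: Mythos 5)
Parts (1) and (2) of your proposal are correct and complete. For part (3), your construction --- rescale each $g$ to $d_g g$ and divide multiplicities by $d_g$ --- is the right one, and the verifications you give (well-definedness of $\theta$, the lifting property, preservation of cross numbers, minimality of $G_0^*$ via restriction to subsets, and properties (a)--(c)) are all sound; but every one of them presupposes that $g \mapsto d_g g$ is injective on $G_0$, and that is exactly where you stop. This is a genuine gap, and the fallback you sketch does not close it: transporting the $g$-multiplicities of a bad atom $A$ onto $g'$ yields a zero-sum sequence over the half-factorial set $G_0\setminus\{g\}$ with the same cross number $\mathsf k(A)$, which contradicts half-factoriality only when $\mathsf k(A)\notin\N$. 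In the LCN case, where every atom has integer cross number and the bad ones have $\mathsf k(A)\ge 2$, a zero-sum sequence of integer cross number over a half-factorial set is perfectly possible, so no contradiction results.

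The clean way to close the gap is to replace one element at a time rather than all simultaneously. For a single $g\in G_0$ the map $\theta_g\colon \mathcal B(G_0)\to\mathcal B\big((G_0\setminus\{g\})\cup\{d_g g\}\big)$ sending $g^{kd_g}S$ to $(d_g g)^{k}S$ is a surjective, cross-number-preserving transfer homomorphism \emph{whether or not} $d_g g$ already lies in $G_0\setminus\{g\}$: lifting a factorization of $\theta_g(B)$ only requires splitting the multiplicity of $d_g g$ into a part coming from $g$ and a part already present, which is always possible and never disturbs the zero-sum condition. If $d_g g\in G_0\setminus\{g\}$, the target is $\mathcal B(G_0\setminus\{g\})$; since transfer homomorphisms preserve systems of sets of lengths, $G_0\setminus\{g\}$ would be non-half-factorial, contradicting minimality of $G_0$. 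Hence each single replacement preserves cardinality, the new set is again minimal non-half-factorial (restrict $\theta_g$ to subsets, as you do), and one iterates; a simultaneous collision $d_g g=d_{g'}g'\notin G_0$ is then also excluded, because after replacing $g$ the element $g'$ keeps the same minimal multiple and would collide with an element of the current set. For the record, the paper does not prove this lemma at all: it cites \cite[Lemma 3.4]{Ge-Zh15a}, where essentially this iterative argument is carried out.
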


\begin{proof}
See \cite[Lemma 3.4]{Ge-Zh15a}.
\end{proof}

\medskip
\begin{lemma} \label{3.2}~

\begin{enumerate}
\item If $g \in G$ with $\ord (g) \ge 3$, then $\ord (g) - 2 \in \Delta^* (G)$. In particular, $n-2 \in \Delta^* (G)$.

\smallskip
\item If $r \ge 2$, then $[1, \mathsf r -1] \subset \Delta^* (G)$.

\smallskip
\item Let $G_0 \subset G$ a subset.
      \begin{enumerate}
      \item If there exists an $U \in \mathcal A (G_0)$ with $\mathsf k (U) < 1$, then $\min \Delta (G_0) \le \exp (G)-2$.

      \smallskip
      \item If $G_0$ is an \text{\rm LCN}-set, then $\min \Delta (G_0) \le |G_0|-2$.
      \end{enumerate}
\end{enumerate}
\end{lemma}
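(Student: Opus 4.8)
The three parts fall into two realization statements (1) and (2), where I must exhibit subsets with a prescribed minimal distance, and two upper bounds (3a)--(3b). Throughout I will use that $\min \Delta(G_0) = \gcd \Delta(G_0)$ divides every difference of two lengths lying in a common set $\mathsf L(B)$, so that producing one element with factorizations of lengths $\ell < \ell'$ already gives $\min \Delta(G_0) \le \ell' - \ell$. For (1) I take $G_0 = \{g,-g\}$ with $m = \ord(g) \ge 3$ (so the two elements are distinct); its atoms are $g(-g)$, $g^m$, $(-g)^m$, and writing $B = g^a(-g)^b$ with $x$ copies of $g(-g)$, $y$ of $g^m$ and $z$ of $(-g)^m$ gives $a = x+my$, $b = x+mz$, hence length $x+y+z = a - (m-2)y + (b-a)/m$. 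Thus every distance is a multiple of $m-2$, while $\mathsf L(g^m(-g)^m) = \{2,m\}$ realizes $m-2$; so $\min\Delta(G_0) = m-2$ and $m-2 \in \Delta^*(G)$, the case $\ord(g)=n$ giving $n-2 \in \Delta^*(G)$.

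For (2) I fix a prime $p$ with $C_p^{\mathsf r} \le G$ (any prime divisor of the smallest invariant factor works, since the rank is attained at such $p$). For $d \in [1,\mathsf r-1]$ choose independent $e_1, \dots, e_{s}$ of order $p$ with $s := d+1 \le \mathsf r$, put $g = e_1 + \dots + e_s$ and $G_0 = \{e_1,\dots,e_s,g\}$. The zero-sum condition on $g^{a_0}\prod e_i^{a_i}$ is $a_i \equiv -a_0 \pmod p$, from which the atoms are exactly $U_0 = g^p$, $U_i = e_i^p$, and $V_j = g^j e_1^{p-j}\cdots e_s^{p-j}$ for $1 \le j \le p-1$. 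The relation $V_j V_{p-j} = U_0 U_1 \cdots U_s$ shows $\{2,s+1\} \subseteq \mathsf L(U_0\cdots U_s)$, so $d = s-1 \in \Delta(G_0)$. To prove this is the minimum I expand an arbitrary factorization of $B = g^{c_0}\prod e_i^{c_i}$ with $x_j$ copies of $V_j$ and $y_0,y_i$ copies of the $U$'s; eliminating the $y$'s gives
\[
L = \frac{(c_0 + c_1 + \dots + c_s) - (s-1)Q}{p}, \qquad Q := \sum_{j=1}^{p-1}(p-j)x_j,
\]
and the $e_i$-equation forces $Q \equiv c_i \equiv -c_0 \pmod p$ independently of the factorization. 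Hence any two lengths differ by a multiple of $s-1$, so $\min\Delta(G_0) = s-1 = d$ and $[1,\mathsf r-1] \subset \Delta^*(G)$.

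For (3a) let $U \in \mathcal A(G_0)$ with $\mathsf k(U)<1$ and $n=\exp(G)$. Since $\ord(g)\mid n$ for each $g\in\supp(U)$, the identity $g^n = (g^{\ord(g)})^{n/\ord(g)}$ refactors $U^n$ into $\sum_g n\,\mathsf v_g(U)/\ord(g) = n\,\mathsf k(U)$ atoms $g^{\ord(g)}$, whereas $U^n$ is also $n$ copies of $U$. Thus $\{n\,\mathsf k(U),\,n\} \subseteq \mathsf L(U^n)$, and as $|U|\ge 2$ forces $\mathsf k(U)\ge 2/n$, the distance $n - n\,\mathsf k(U) \le n-2$ is divisible by $\min\Delta(G_0)$, giving $\min\Delta(G_0) \le n-2$. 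For (3b) take, in a non-half-factorial LCN-set, an atom $U$ with $\mathsf k(U)>1$ (it exists since all cross numbers are $\ge 1$ but not all equal $1$); then $|\supp(U)|\ge 2$, so $U \mid B := \prod_{g\in\supp(U)} g^{\ord(g)}$ and $C := BU^{-1}$ is a nontrivial zero-sum sequence. Here $B$ factors into the $|\supp(U)|$ atoms $g^{\ord(g)}$, while every factorization of $C$ has length at most $\mathsf k(C) = |\supp(U)| - \mathsf k(U)$ by the LCN property, so $U\cdot C$ has length at most $1 + (|\supp(U)|-\mathsf k(U)) < |\supp(U)|$. The two lengths differ by a value in $[1,\,|\supp(U)|-2]\subseteq[1,\,|G_0|-2]$, whence $\min\Delta(G_0)\le|G_0|-2$.

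The main obstacle is the exact-minimality in (2): the relation $V_jV_{p-j}=U_0\cdots U_s$ only delivers $d\in\Delta(G_0)$, and what genuinely requires care is excluding any smaller distance. This rests on two facts that I would verify carefully, namely the \emph{complete} determination of $\mathcal A(G_0)$ (so that no stray atom introduces finer length gaps) and the rigidity $Q\equiv -c_0 \pmod p$ forced by the zero-sum condition, which together pin every length difference to a multiple of $s-1$. Once these are in place the remaining computations in all three parts are routine bookkeeping with cross numbers and the length formula.
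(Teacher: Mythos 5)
The paper does not prove Lemma~3.2 itself; it cites \cite[Proposition 6.8.2, Lemmas 6.8.5 and 6.8.6]{Ge-HK06a}. Your self-contained arguments are correct and are essentially the standard proofs from that reference: the two-element set $\{g,-g\}$ for (1), the set $\{e_1,\dots,e_s, e_1+\dots+e_s\}$ with the complete atom list and the length formula for (2), the comparison of $U^n$ with $\prod_g (g^{\ord(g)})^{n\mathsf v_g(U)/\ord(g)}$ for (3a), and the comparison of $\prod_{g\in\supp(U)}g^{\ord(g)}$ with $U\cdot C$ using $\mathsf k(C)\ge t$ for (3b). No gaps.
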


\begin{proof}
See \cite[Proposition 6.8.2 and Lemmas 6.8.5 and 6.8.6]{Ge-HK06a}.
\end{proof}

\medskip
\begin{lemma} \label{3.3}
Let  $G_0\subset G$ be a subset,  $g\in G_0$, and $s$ the smallest integer such that $sg\in \langle G_0\setminus\{g\}\rangle$, and suppose that $s<\ord(g)$.
Then $\ord(sg)>1$ and  for each prime $p$ dividing $\ord(sg)$, there exists an atom $A\in \mathcal A(G_0)$ with $2\le |\supp(A)|\le \mathsf r(G)+1$, $s \le  \mathsf v_g(A) \le  \ord(g)/2$, and $p\nmid \frac{\mathsf v_g(A)}{s}$. In particular,
\begin{enumerate}
\item If $|G_0|\ge \mathsf r(G)+2$, then there exist $s_0<\ord(g)$ and $E\subsetneq G_0\setminus \{g\} $ such that $s_0g\in \langle E \rangle$.

\item If $s=1$ and $\ord(g)$ is a prime power, then there exists a subset $E\subset G_0\setminus\{g\}$ with $|E|\le \mathsf r(G)$ such that $g\in \langle E\rangle $.
\end{enumerate}
\end{lemma}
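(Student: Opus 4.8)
The plan is to first dispose of the assertion $\ord(sg)>1$ and then, for a fixed prime $p\mid\ord(sg)$, to build the required atom by producing a \emph{small} generating set for a suitable multiple of $g$. By Lemma \ref{3.1} the integer $s$ equals $\min\{k\in\N : kg\in\langle G_0\setminus\{g\}\rangle\}$ and divides $\ord(g)$; together with $s<\ord(g)$ this already gives $sg\neq 0$, hence $\ord(sg)>1$, and moreover $s\le\ord(g)/2$. Writing $H=\langle G_0\setminus\{g\}\rangle$, the same divisibility shows $\langle g\rangle\cap H=\langle sg\rangle$, cyclic of order $d:=\ord(sg)=\ord(g)/s$. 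Fixing a prime $p\mid d$ and the exponent $a$ with $p^a\| d$, I would reduce the entire statement to the following claim: there is a subset $E\subseteq G_0\setminus\{g\}$ with $|E|\le\mathsf r(G)$ such that the element $w:=\tfrac{d}{p^a}\,sg$, which has order $p^a$, lies in $\langle E\rangle$.

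The core of the argument, and the step I expect to be the main obstacle, is establishing this claim with the sharp bound $|E|\le\mathsf r(G)$. Here I would pass to the $p$-Sylow subgroup $G_p$ of $G$. Writing $G_0\setminus\{g\}=\{h_1,\dots,h_m\}$ and letting $h'_i\in\langle h_i\rangle$ denote the $p$-component of $h_i$, the relation $w\in\langle h_1,\dots,h_m\rangle$ together with $w\in G_p$ yields $w\in Q:=\langle h'_1,\dots,h'_m\rangle$ after projecting to $G_p$. Now $Q$ is a subgroup of the abelian $p$-group $G_p$, hence has rank at most $\mathsf r(G)$, and by the Burnside basis theorem a family generates $Q$ as soon as its image generates the $\F_p$-vector space $Q/pQ$. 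Selecting among the spanning family $\{\overline{h'_i}\}$ a basis of $Q/pQ$ isolates at most $\mathsf r(G)$ indices whose $h'_i$ generate $Q$; since $h'_i\in\langle h_i\rangle$, the corresponding original elements form the desired set $E$ with $w\in\langle E\rangle$.

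Granting the claim, I would construct the atom as follows. Because $w\in\langle g\rangle\cap\langle E\rangle\subseteq\langle sg\rangle$ with $\ord(w)=p^a$, the cyclic group $\langle g\rangle\cap\langle E\rangle$ has order $e$ with $p^a\mid e\mid d$; hence the least positive $u$ with $ug\in\langle E\rangle$ equals $u=\ord(g)/e=ts$, where $t=d/e$ satisfies $p\nmid t$ (as $v_p(e)=a$), and $e\ge p\ge 2$ forces $s\le u\le\ord(g)/2$. Expressing $ug$ through the elements of $E$ produces a zero-sum sequence $B=g^{u}\prod_{h\in E}h^{c_h}$. Decomposing $B$ into atoms and invoking the minimality of $u$ (every atom meeting $g$ has $g$-multiplicity a positive multiple of $u$, since $\{k\in\N : kg\in\langle E\rangle\}=u\N$) shows that exactly one atom $A$ contains $g$, necessarily with $\mathsf v_g(A)=u$. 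Then $\supp(A)\subseteq\{g\}\cup E$ gives $|\supp(A)|\le\mathsf r(G)+1$, while $u<\ord(g)$ forces a second element so that $2\le|\supp(A)|$; and $\mathsf v_g(A)=ts$ with $s\le ts\le\ord(g)/2$ and $p\nmid t=\mathsf v_g(A)/s$ delivers every asserted property.

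The two ``in particular'' statements then follow quickly. For (1), any prime $p\mid\ord(sg)>1$ produces such an atom $A$; putting $E=\supp(A)\setminus\{g\}$ and $s_0=\mathsf v_g(A)<\ord(g)$ gives $s_0g\in\langle E\rangle$, and since $|E|\le\mathsf r(G)<|G_0\setminus\{g\}|$ under the hypothesis $|G_0|\ge\mathsf r(G)+2$, the set $E$ is a proper subset of $G_0\setminus\{g\}$. For (2), if $s=1$ and $\ord(g)=p^a$ is a prime power, then $d=\ord(g)=p^a$ and $w=\tfrac{d}{p^a}sg=g$, so the claim directly yields a set $E\subseteq G_0\setminus\{g\}$ with $|E|\le\mathsf r(G)$ and $g\in\langle E\rangle$.
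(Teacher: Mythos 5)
Your proof is correct and follows essentially the same route as the paper's: both pass to the $p$-primary part of $\langle sg\rangle$ (you via the Sylow projection and the Burnside basis theorem, the paper via multiplication by $\exp(G)/p^{k}$, which amounts to the same thing), extract a subset $E\subset G_0\setminus\{g\}$ of size at most $\mathsf r(G)$ whose span captures that element, and then read off the atom and the divisibility conditions $s\mid \mathsf v_g(A)$, $p\nmid \mathsf v_g(A)/s$ from the minimal multiple of $g$ lying in $\langle E\rangle$. The only cosmetic differences are that you build the atom by decomposing an explicit zero-sum sequence where the paper invokes Lemma \ref{3.1}.1 directly, and you organize the divisibility bookkeeping via the order of $\langle g\rangle\cap\langle E\rangle$ rather than via $d_{\nu}\mid\gcd(\tfrac{n}{p^{k}}s,\ord(g))$.
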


\begin{proof}
 We set $\exp(G)=n = p_1^{k_1} \cdot \ldots \cdot p_t^{k_t}$, where $t, k_1, \ldots, k_t \in \N$ and $p_1, \ldots, p_t$ are distinct primes. Since $s<\ord(g)$, we have that $\ord(sg)>1$.
  Let $\nu \in [1,t]$ with $p_{\nu} \t \ord (sg)$.
Since $sg\in \langle G_0\setminus \{g\} \rangle$, it follows that $0 \ne \frac{n}{p_{\nu}^{k_{\nu}}}sg\in G_{\nu} = \langle \frac{n}{p_{\nu}^{k_{\nu}}}h \mid h\in G_0\setminus \{g\}\rangle$. Obviously, $G_{\nu}$ is a $p_{\nu}$-group.
Let $E_{\nu} \subset G_0\setminus \{g\}$ be minimal such that $\frac{n}{p_{\nu}^{k_{\nu}}}sg \in  \langle \frac{n}{p_{\nu}^{k_{\nu}}} E_{\nu}\rangle$. Since $\langle \frac{n}{p_{\nu}^{k_{\nu}}} E_{\nu}\rangle \subset G_{\nu}$ and $G_{\nu}$ is a $p_{\nu}$-group,
   it follows that
\[
1\le |E_{\nu}| = |\frac{n}{p_{\nu}^{k_{\nu}}} E_{\nu}| \le \mathsf r (G_{\nu})  \le \mathsf r (G) \,.
\]
Let $d_{\nu} \in \N$ be  minimal  such that $d_{\nu}g\in \langle E_{\nu}\rangle$. Since
$0 \ne \frac{n}{p_{\nu}^{k_{\nu}}}sg\in \langle E_{\nu} \rangle$, it follows that $d_{\nu} < \ord (g)$.
By Lemma \ref{3.1}.1, $d_{\nu}\t \gcd(\frac{n}{p_{\nu}^{k_{\nu}}}s, \ord(g))$ and there exists an atom $U_{\nu}$ such that $\mathsf v_g(U_{\nu})=d_{\nu}$ and $|\supp(U_{\nu})\setminus\{g\}| \le |E_{\nu}| \le \mathsf r(G)$. Since $\mathsf v_g (U_{\nu}) = d_{\nu} < \ord (g)$, it follows that $|\supp (U_{\nu})| \ge 2$. By the minimality of $s$ and $d_{\nu}\t \frac{n}{p_{\nu}^{k_{\nu}}}s$, we have that $s\t d_{\nu}$ and $p_{\nu}\nmid \frac{d_{\nu}}{s}$.

If $|G_0|\ge \mathsf r(G)+2$, then $|E_{\nu}|\le \mathsf r(G)<|G_0\setminus \{g\}|$ implies that $E_{\nu} \subsetneq G_0 \setminus \{g\}$, and the assertion holds with $E=E_{\nu}$ and $s_0 = d_{\nu}$.

If $s=1$ and $\ord(g)$ is a prime power, then $\ord(g)$ is a power of $p_{\nu}$ which implies that  $\gcd(\frac{n}{p_{\nu}^{k_{\nu}}}s, \ord(g))=1$ whence $d_{\nu}=1$ and $g\in \langle E_{\nu}\rangle$.
\end{proof}

\medskip
\begin{lemma} \label{3.4}
Let $G_0 \subset G$ be a minimal non-half-factorial {\rm LCN}-set with $|G_0|\ge r+2$.  Suppose that
for any $h\in G_0$, $h\in \langle G_0\setminus\{h\}\rangle$ but $h\not\in \langle G_0\setminus \{h,\ h'\}\rangle $ for any $h'\in G_0\setminus \{h\}$. Then
 $|G_0|\le r+\frac{n}{2}$.
In particular, if  each atom $A\in \mathcal A(G_0)$ with $\supp(A)=G_0$ has cross number $\mathsf k(A)>1$,
then $\min \Delta(G_0)\le \frac{5n}{6}-4$.
\end{lemma}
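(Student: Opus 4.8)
The plan is to exploit the strong irredundancy hypothesis to pin down, for every $g \in G_0$, the shape of the representations in which $g$ participates, and then to convert this into a counting bound on $|G_0|$. First I would reformulate the hypothesis: since $g \in \langle G_0 \setminus \{g\}\rangle$ for every $g$, the set $G_0 \setminus \{g\}$ is inclusion-minimal among the subsets $S \subseteq G_0\setminus\{g\}$ with $g \in \langle S\rangle$. Concretely, in any representation $g = \sum_{h \in G_0 \setminus \{g\}} w_h h$ with $w_h \in [0, \ord(h)-1]$, every coefficient satisfies $w_h \ge 1$; otherwise $g \in \langle G_0 \setminus \{g,h\}\rangle$, contradicting the hypothesis (this is the constructive content behind Lemma \ref{3.1}.2). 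In particular each $g$ yields a full-support zero-sum sequence $g^{\ord(g)-1}\prod_{h \ne g} h^{w_h}$. As a second preliminary I would rule out prime-power orders: if $\ord(g)$ were a prime power, then, the smallest $s$ with $sg \in \langle G_0\setminus\{g\}\rangle$ being $1$, Lemma \ref{3.3}.2 produces $E \subseteq G_0\setminus\{g\}$ with $|E|\le r$ and $g \in \langle E\rangle$; since $|G_0|\ge r+2$ we may pick $h' \in (G_0\setminus\{g\})\setminus E$, whence $g \in \langle G_0\setminus\{g,h'\}\rangle$, a contradiction. Hence no $\ord(g)$ is a prime power, so $\ord(g)\ge 6$ and $n \ge 6$; this is also what makes the factor $\tfrac n2$ plausible, since Lemma \ref{3.3} always lets us take multiplicities $\le \ord(g)/2$.

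For the bound $|G_0| \le r + \tfrac n2$ I would argue by a rank-versus-order count. Choose $B \subseteq G_0$ with $\langle B\rangle = \langle G_0\rangle$ and $|B| = \mathsf r(\langle G_0\rangle)\le r$; the other $|G_0|-|B| \ge |G_0|-r$ elements are redundant. The aim is that each redundant element forces an independent contribution of at least $2$ to the order budget $n = \exp(G)$. By the reformulated hypothesis the quotient $\langle G_0\rangle/\langle G_0\setminus\{g,h\}\rangle$ is, for every pair $g,h$, a nontrivial cyclic group generated by the image of $g$ (equivalently of $h$); tracking these cyclic quotients along a suitable ordering of $G_0$, together with the multiplicity bound $\mathsf v_g(A)\le \ord(g)/2$ and the fact that all $w_h \in [1,\ord(h)-1]$, should give $2\big(|G_0|-\mathsf r(\langle G_0\rangle)\big)\le n$, i.e. $|G_0|\le r+\tfrac n2$.

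For the ``In particular'' statement I would pass to $C = \prod_{g \in G_0} g^{\ord(g)} \in \mathcal B(G_0)$. It factors into the $|G_0|$ atoms $g^{\ord(g)}$, each of cross number $1$, so $|G_0| \in \mathsf L(C)$ and $\mathsf k(C) = |G_0|$. Since $G_0$ is minimal non-half-factorial and an LCN-set, every atom has cross number $\ge 1$, with equality exactly for the atoms of proper support; under the extra hypothesis the full-support atoms have cross number $>1$. Thus for any factorization of $C$ the cross-number identity gives length $=|G_0|-\sum(\mathsf k(A_i)-1)$, summed over the full-support atoms, each term positive, so $|G_0| = \max \mathsf L(C)$. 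As $\min\Delta(G_0)=\gcd\Delta(G_0)$ divides $|G_0|-L$ for every $L \in \mathsf L(C)$, it suffices to exhibit one factorization of length $L<|G_0|$ with $|G_0|-L$ small. Taking a full-support atom $W=\prod g^{a_g}$ of least cross number and its complement $W^{c}=\prod g^{\ord(g)-a_g}$ (a full-support zero-sum sequence with $\mathsf k(W^c)=|G_0|-\mathsf k(W)$), the factorization $C = W\cdot W^c$ together with a long factorization of $W^c$ supplies the needed second length; bounding $\mathsf k(W)-1$ and the number of forced full-support atoms in $W^c$ via $\ord(g)\ge 6$ and multiplicities $\le \ord(g)/2$ should yield the sharpened estimate $|G_0|\le \tfrac56 n-2$, which fed into Lemma \ref{3.2}.3(b) gives $\min\Delta(G_0)\le \tfrac{5n}{6}-4$.

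The main obstacle is the size bound $|G_0|\le r+\tfrac n2$: converting the qualitative cyclic-quotient/irredundancy information into a clean count with the exact constant $\tfrac12$ is delicate, since the element orders are only known to be non-prime-powers and the cyclic quotients $\langle G_0\rangle/\langle G_0\setminus\{g,h\}\rangle$ attached to different pairs interact. In the ``In particular'' part the analogous difficulty is to bound $\max\mathsf L(W^c)$ from below, i.e. to guarantee that the complement of a minimal full-support atom still factors into enough atoms that the two displayed lengths of $C$ differ by at most $\tfrac{5n}{6}-4$.
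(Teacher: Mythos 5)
There are two genuine gaps, both at the decisive steps, and your own ``main obstacle'' remarks correctly locate them. For the bound $|G_0|\le r+\frac n2$, the proposed ``rank-versus-order count'' ($2(|G_0|-\mathsf r(\langle G_0\rangle))\le n$ via accumulating cyclic quotients) is not an argument: $n=\exp(G)$ carries no additive budget to which redundant generators contribute ``independently'' $2$ each, and nothing in the sketch produces such an inequality. The paper's mechanism is entirely different and local to two atoms. One fixes $g\in G_0$, takes $s$ minimal with $sg\in\langle E\rangle$ for some $E\subsetneq G_0\setminus\{g\}$, and gets an atom $V$ with $\supp(V)=\{g\}\cup E\subsetneq G_0$; minimality of $G_0$ forces $\mathsf k(V)=1$, while Lemma \ref{3.1}.2 forces $\mathsf v_h(V)\ge 2$ for all $h\in\supp(V)$, so $1=\mathsf k(V)\ge \frac 2n(|E|+1)$, i.e.\ $|E|\le\frac n2-1$. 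That cross-number identity on a \emph{single proper-support atom} is the sole source of the constant $\frac12$; your reformulation (coefficients $w_h\ge 1$ in full-support representations) is too weak to yield it. The second missing ingredient is why $G_0$ is covered by only two such supports: taking a prime $p\mid s$, Lemma \ref{3.3} gives an atom $U_1$ with $|\supp(U_1)|\le r+1$ and $p\nmid\mathsf v_g(U_1)$, and then $d=\gcd(s,\mathsf v_g(U_1))<s$ together with the minimality of $s$ forces $E\cup(\supp(U_1)\setminus\{g\})=G_0\setminus\{g\}$, whence $|G_0|\le 1+(\frac n2-1)+r$.

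The ``in particular'' part as you plan it would not go through even if completed. Feeding a cardinality bound into Lemma \ref{3.2}.3(b) requires $|G_0|\le\frac{5n}{6}-2$, which is a strictly stronger statement than the proved $|G_0|\le r+\frac n2$ (it fails to follow whenever $r>\frac n3-2$) and is not what the lemma asserts; the conclusion $\min\Delta(G_0)\le\frac{5n}{6}-4$ is not obtained from $|G_0|-2$ at all. The paper instead exhibits one element with two explicit lengths: from $dg=x_1sg+x_2\mathsf v_g(U_1)g$ one writes $V^{x_1}U_1^{x_2}=(g^{\ord(g)})^yW$, shows via minimality of $s$ that any atom $W_1\mid W$ with $\mathsf v_g(W_1)>0$ has full support and hence $\mathsf k(W_1)>1$, and concludes that $\mathsf L(V^{x_1}U_1^{x_2})$ contains two lengths differing by at most $x_1+x_2-2\le\frac{\ord(g)}{s}+\frac{\ord(g)}{\mathsf v_g(U_1)}-4$. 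The constant $\frac56$ then comes from the arithmetic fact that $p\mid s$ and $p\nmid\mathsf v_g(U_1)$ preclude $s=\mathsf v_g(U_1)=2$, so $\frac1s+\frac1{\mathsf v_g(U_1)}\le\frac12+\frac13$. Your alternative via $C=\prod_g g^{\ord(g)}$ correctly gives $\max\mathsf L(C)=|G_0|$, but the required lower bound on a second length of $C$ (equivalently on $\max\mathsf L(W^c)$) is exactly the part left unproved, and there is no indication it can be controlled by $\frac{5n}{6}-4$.
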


\begin{proof}
We choose an element $g\in G_0$. If $\ord(g)$ is a prime power, then there exists $E\subset G_0\setminus \{g\}$ such that $g\in \langle E\rangle$ and $|E|\le r<|G_0|-1$ by Lemma \ref{3.3}, a contradiction to the assumption on $G_0$. Thus $\ord(g)$ is not a prime power.

Let $s \in \N$ be  minimal  such that there exists a subset $E\subsetneq G_0\setminus \{g\}$ with $sg\in \langle E \rangle$, and  by Lemma \ref{3.3}.1, we observe that $s < \ord (g)$. Let  $E \subsetneq G_0\setminus \{g\}$ be  minimal such that $sg\in \langle E \rangle$. By Lemma \ref{3.1}.1, there is an atom   $V$  with $\mathsf v_g(V)=s\t \ord(g)$ and $\supp(V)=\{g\}\cup E\subsetneq G_0$.
By Lemma \ref{3.1}.2, for each $h\in \supp(V)$, $\mathsf v_h(V)\ge 2$. Since $G_0$ is a minimal non-half-factorial LCN set, we obtain that
\[
1=\mathsf k(V)\ge  \frac{2}{n}(|E|+1) \,,
\]
whence $|E|\le \frac{n}{2}-1$.

Since $s \ge 2$, there is a  prime  $p\in \N$   dividing $s$ and hence $p \t s \t \ord (g)$.  By Lemma \ref{3.3}, there exists an atom $U_1$ such that $|\supp(U_1)|\le r+1$ and  $p\nmid  \mathsf v_g(U_1)$, and therefore $\supp(U_1)\subsetneq G_0$.

Let $d=\gcd(s,\mathsf v_g(U_1))$ and $E_1=\supp(U_1)\setminus\{g\}$.  Then $d<s$ and $dg \in \langle sg, \mathsf v_g(U_1)g \rangle \subset \langle E\cup E_1 \rangle \subset \langle G_0\setminus \{g\} \rangle$.
The minimality of $s$ implies  that $E\cup E_1=G_0\setminus \{g\}$, and thus  $|G_0|\le 1+|E|+|E_1|\le 1+ r+\frac{n}{2}-1= r+\frac{n}{2}$.

Suppose that  each atom $A\in \mathcal A(G_0)$ with $\supp(A)=G_0$ has cross number $\mathsf k(A)>1$.
There exist $x_1\in [1,\frac{\ord(g)}{s}-1]$ and $x_2\in [1,\frac{\ord(g)}{\mathsf v_g(U_1)}-1]$ such that $dg=x_1sg+x_2\mathsf v_g(U_1)g$. Thus $d+y\ord(g)=x_1s+x_2\mathsf v_g(U_1)$ with some $y\in \N_0$. Let $V^{x_1}U_1^{x_2}=(g^{\ord(g)})^y\cdot W$, where $W \in \mathcal B (G)$  with $\mathsf v_g(W)=d$, and let $W_1$ be an atom dividing $W$ with $\mathsf v_g(W_1)>0$. Since $\mathsf v_g(W_1)\le d<s$, the minimality of $s$ implies that $\supp(W_1)=G_0$ and hence  $\mathsf k(W_1)>1$. Since $G_0$ is minimal non-half-factorial, we have that $\mathsf k(V)=\mathsf k(U_1)=1$. Therefore there exists $l\in \N$ with $2\le l<x_1+x_2$ such that $\{l, x_1+x_2\}\subset \mathsf L(V^{x_1}U_1^{x_2})$. Then
\[
\min \Delta(G_0)\le x_1+x_2-l\le \frac{\ord(g)}{s}+\frac{\ord(g)}{\mathsf v_g(U_1)}-4\le \frac{5n}{6}-4 \,.  \qedhere
\]
\end{proof}

\medskip
For our next result we need the following technical lemma

\medskip
\begin{lemma}\label{3.5}
Let $G_0\subset G$ be a non-half-factorial subset satisfying the following two conditions:
\begin{itemize}
\item[(a)] There exists some $g\in G_0$ such that $\Delta (G_0\setminus \{g\})=\emptyset$.
\item[(b)] There exists some  $U\in \mathcal A(G_0)$ with $\mathsf k(U)=1$ and $\gcd(\mathsf v_g(U),\ord(g))=1$.
\end{itemize}
Then $\mathsf k(\mathcal A(G_0))\subset \N$ and
\[
\min \Delta (G_0)\mid \gcd \{\mathsf k(A)-1 \mid A\in \mathcal A(G_0)\}\,.
\]
Note that the conditions hold if $\Delta(G_1)=\emptyset$ for each $G_1\subsetneq G_0$ and there exists some $G_2$ such that $\langle G_2\rangle=\langle G_0\rangle$ and $|G_2|\le |G_0|-2$.
\end{lemma}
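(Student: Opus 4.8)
The plan is to extract both conclusions from one family of products. Throughout write $m=\ord(g)$ and $d=\min\Delta(G_0)=\gcd\Delta(G_0)$, so that any two integers lying in a common set of lengths $\mathsf L(T)$ differ by a multiple of $d$. Condition (a) says that $G_0\setminus\{g\}$ is half-factorial, hence every atom supported in $G_0\setminus\{g\}$ has cross number $1$, and every $B\in\mathcal B(G_0\setminus\{g\})$ satisfies $\mathsf k(B)\in\N_0$, this value being the common length of all factorizations of $B$. Atoms $A$ with $g\notin\supp(A)$ need no argument: they are supported in $G_0\setminus\{g\}$, so $\mathsf k(A)=1$ and $d\mid 0$.

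So fix an atom $A$ with $t:=\mathsf v_g(A)\in[1,m]$; the case $A=g^{m}$ is trivial, so assume $t\in[1,m-1]$. Let $U$ be the atom from (b), set $s:=\mathsf v_g(U)$, and recall that $\mathsf k(U)=1$ and $\gcd(s,m)=1$. Since $s$ is a unit modulo $m$, I choose $y\in[1,m-1]$ with $t+sy\equiv 0\pmod m$ and write $t+sy=cm$ with $c\in\N$. Now consider $A\,U^{y}\in\mathcal B(G_0)$. It has an obvious factorization into the atoms $A,U,\dots,U$ of length $1+y$. On the other hand $\mathsf v_g(A\,U^{y})=cm$, and since $mg=0$ one may split off $(g^{m})^{c}$, writing $A\,U^{y}=(g^{m})^{c}\,B$ with $B\in\mathcal B(G_0\setminus\{g\})$; as $g^{m}$ is an atom with $\mathsf k(g^{m})=1$, this is a second factorization, of length $c+\mathsf k(B)$.

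Comparing cross numbers gives $\mathsf k(A)+y=\mathsf k(A\,U^{y})=c+\mathsf k(B)$, so $\mathsf k(A)=c+\mathsf k(B)-y\in\Z$; being positive it lies in $\N$, which is the first assertion. Comparing lengths, both $1+y$ and $c+\mathsf k(B)$ belong to $\mathsf L(A\,U^{y})$, whence $d$ divides
\[
(c+\mathsf k(B))-(1+y)=\mathsf k(A)-1\,.
\]
Thus $d\mid \mathsf k(A)-1$ for every atom $A$, and therefore $d\mid\gcd\{\mathsf k(A)-1\mid A\in\mathcal A(G_0)\}$. The only place where (b) is used is the choice of $y$, which drives $\mathsf v_g$ up to a multiple of $\ord(g)$ and thereby routes the ``$g$-part'' through the half-factorial set $G_0\setminus\{g\}$; this single coupling of the cross-number bookkeeping with the length comparison is the heart of the matter, and everything else is routine.

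For the closing remark, minimality gives $\Delta(G_0\setminus\{g\})=\emptyset$ for \emph{every} $g\in G_0$, so (a) holds for all $g$ and only (b) must be produced for one $g$. The hypothesis $\langle G_2\rangle=\langle G_0\rangle$ with $|G_2|\le|G_0|-2$ forces $\mathsf r(\langle G_0\rangle)\le|G_2|\le|G_0|-2$, i.e. $|G_0|\ge\mathsf r(\langle G_0\rangle)+2$; this is exactly the slack needed to apply Lemma \ref{3.3} inside $\langle G_0\rangle$, which for a suitable $g$ returns a proper subset $E\subsetneq G_0\setminus\{g\}$ carrying an atom $U$ with $\supp(U)\subsetneq G_0$, so that $\mathsf k(U)=1$ by minimality. (If convenient one first normalizes $G_0$ via Lemma \ref{3.1}.3, which preserves cross numbers of atoms and hence $\min\Delta$.) I expect the genuine difficulty to be the coprimality $\gcd(\mathsf v_g(U),\ord(g))=1$: this is not delivered by the rank bound alone, since an irredundant $G_0$ may still be large, but must be squeezed out of the prime-by-prime conclusion of Lemma \ref{3.3} together with the freedom to choose $g$, and that is the step I would treat with the most care.
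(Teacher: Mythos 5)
Your proof of the two displayed conclusions is correct and complete: multiplying $A$ by $U^{y}$ so that $\mathsf v_g$ reaches a multiple of $\ord(g)$, splitting off $(g^{\ord(g)})^{c}$, and routing the remainder through the half-factorial set $G_0\setminus\{g\}$ yields both $\mathsf k(A)\in\N$ and $\min\Delta(G_0)\mid \mathsf k(A)-1$ in one stroke. The paper disposes of this part by citing \cite[Lemma 6.8.5]{Ge-HK06a}, so your self-contained argument is an expansion rather than a deviation, and I see no error in it.

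The gap is in the closing remark, and you flag it yourself: you never actually produce the coprimality $\gcd(\mathsf v_g(U),\ord(g))=1$, and the route through Lemma \ref{3.3} that you sketch will not deliver it, since that lemma only controls divisibility by one prime at a time. The hypothesis $|G_2|\le |G_0|-2$ is not there to feed a rank bound; it is there so that one can interpolate a subset $G_1$ with $G_2\subsetneq G_1\subsetneq G_0$. Pick $g\in G_1\setminus G_2$. Since $g\in\langle G_0\rangle=\langle G_2\rangle\subset\langle G_1\setminus\{g\}\rangle$, Lemma \ref{3.1}.1 gives $\min\{k\in\N\mid kg\in\langle G_1\setminus\{g\}\rangle\}=1$, hence an atom $U\in\mathcal A(G_1)\subset\mathcal A(G_0)$ with $\mathsf v_g(U)=1$; then $\gcd(\mathsf v_g(U),\ord(g))=1$ holds trivially, and $\mathsf k(U)=1$ because $\supp(U)\subset G_1\subsetneq G_0$ and every proper subset of $G_0$ is assumed half-factorial. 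Condition (a) holds for every $g$ by the same assumption, so the remark follows with no appeal to Lemma \ref{3.3}, to ranks, or to the normalization of Lemma \ref{3.1}.3.
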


\begin{proof}
The first statement follows from \cite[Lemma 6.8.5]{Ge-HK06a}.  If $\Delta (G_1) = \emptyset$ for all $G_1 \subsetneq G_0$, then Condition (a) holds. Let $G_2 \subsetneq G_1 \subsetneq G_0$ with $\langle G_2 \rangle = \langle G_0 \rangle$. If $g \in G_1 \setminus G_2$, then $\langle G_2 \rangle = \langle G_0 \rangle$ implies that there is some $U \in \mathcal A (G_1)$ with $\mathsf v_g (U)=1$, and since $G_1 \subsetneq G_0$, it follows that $\mathsf k (U)=1$.
\end{proof}

\medskip
\begin{lemma} \label{3.6}
Suppose that $\exp(G)=n$ is not a prime power.
Let $G_0 \subset G$ be a minimal non-half-factorial {\rm LCN}-set with $|G_0|\ge r+2$ such that $h \in \langle G_0 \setminus \{h\} \rangle$ for every $h \in G_0$. Suppose that one of the following properties is  satisfied{\rm \,:}
\begin{enumerate}
\item[(a)] For any $h\in G_0$,   $h\not\in \langle G_0\setminus \{h,\ h'\}\rangle $ for any $h'\in G_0\setminus \{h\}$ and
           there exists an atom $A\in \mathcal A(G_0)$ with  $\mathsf k(A)=1$  and $\supp(A)=G_0$.

\smallskip
\item[(b)] There is a subset $G_2 \subset G_0$ such that $\langle G_2 \rangle = \langle G_0 \rangle$ and $|G_2| \le |G_0|-2$.
\end{enumerate}
Then $\min \Delta (G_0) \le   \frac{n+r-3}{2}$.
\end{lemma}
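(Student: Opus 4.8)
The plan is to reduce the estimate, in both cases, to two facts: the divisibility $\min\Delta(G_0)\mid \mathsf k(A)-1$ for every $A\in\mathcal A(G_0)$, and the existence of a non-half-factorial atom of cross number at most $\tfrac{n+r-1}{2}$. First I would record the structural consequences of minimality. Since $G_0$ is minimal non-half-factorial, every proper subset is half-factorial, so every atom with support properly contained in $G_0$ has cross number $1$; as $G_0$ is a non-half-factorial $\mathrm{LCN}$-set, there is at least one atom of cross number $>1$, and every such atom has full support $G_0$. Thus the non-half-factorial atoms are exactly the full-support atoms of cross number exceeding $1$. Writing $\kappa^\ast=\min\{\mathsf k(A)\mid A\in\mathcal A(G_0),\ \mathsf k(A)>1\}$, the divisibility immediately gives $\min\Delta(G_0)\le \kappa^\ast-1$, so the whole task reduces to proving $\kappa^\ast\le\tfrac{n+r-1}{2}$.

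To secure the divisibility I would invoke Lemma \ref{3.5}. In case (b) its hypotheses hold by the remark appended to that lemma: choosing $g\in G_0\setminus G_2$ (possible since $|G_2|\le|G_0|-2$) and using $\langle G_2\rangle=\langle G_0\rangle$, one obtains an atom $U$ with $\mathsf v_g(U)=1$ and $\supp(U)\subset\{g\}\cup G_2\subsetneq G_0$, whence $\mathsf k(U)=1$ and $\gcd(\mathsf v_g(U),\ord(g))=1$, while $\Delta(G_0\setminus\{g\})=\emptyset$ by minimality; so $\mathsf k(\mathcal A(G_0))\subset\N$ and $\min\Delta(G_0)\mid \mathsf k(A)-1$ for every atom $A$. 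In case (a) the first assumption is precisely the hypothesis of Lemma \ref{3.4}, which supplies the size bound $|G_0|\le r+\tfrac n2$, and (a) also provides a full-support atom $A_0$ with $\mathsf k(A_0)=1$; I would use $A_0$ together with Lemma \ref{3.1}.1 to locate a coordinate $g$ with $\gcd(\mathsf v_g(A_0),\ord(g))=1$ in order to apply Lemma \ref{3.5} here as well, falling back on a direct length-gap construction modelled on the second half of Lemma \ref{3.4} should such a coordinate be unavailable.

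The engine for bounding $\kappa^\ast$ is complementation. For a full-support atom $A$ put $\overline A=\prod_{h\in G_0}h^{\ord(h)-\mathsf v_h(A)}$; this is again a full-support zero-sum sequence with $\mathsf k(\overline A)=|G_0|-\mathsf k(A)$ and $A\cdot\overline A=\prod_{h\in G_0}h^{\ord(h)}$. Apply this to an atom $A$ realizing $\kappa^\ast$. If $\overline A$ is non-half-factorial, it contains an atom of cross number $>1$ that is at most $\mathsf k(\overline A)=|G_0|-\kappa^\ast$; minimality of $\kappa^\ast$ then forces $\kappa^\ast\le|G_0|-\kappa^\ast$, i.e.\ $\kappa^\ast\le\tfrac{|G_0|}{2}$. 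Combined with $|G_0|\le r+\tfrac n2$ (case (a)) this gives $\kappa^\ast\le\tfrac{2r+n}{4}\le\tfrac{n+r-1}{2}$ as soon as $n\ge 2$, and hence $\min\Delta(G_0)\le\kappa^\ast-1\le\tfrac{n+r-3}{2}$. An analogous control of $|G_0|$ (or directly of $\kappa^\ast$) from the redundancy of $G_2$ settles case (b) in this ``small cross number'' regime.

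The main obstacle is the complementary regime, in which $\overline A$ is half-factorial for the atom realizing $\kappa^\ast$ — equivalently $\kappa^\ast>\tfrac{|G_0|}{2}$, since a full-support sequence of cross number below $\tfrac{|G_0|}{2}$ cannot contain a non-half-factorial atom. Here the reduction $\min\Delta(G_0)\le\kappa^\ast-1$ is too weak, and one must bound $\kappa^\ast$ (equivalently $|G_0|-\kappa^\ast=\mathsf k(\overline A)$ from below) intrinsically. I would analyse the half-factorial complement $\overline A$: it has full support, cross number $|G_0|-\kappa^\ast$, and factors into exactly that many atoms of cross number $1$, each of length at most $n$; feeding in the multiplicity restrictions $\mathsf v_h\ge 2$ of Lemma \ref{3.1}.2 (available under (a)) or the dependence relations carried by $G_2$ (under (b)) should force $\overline A$ to split into enough atoms that $|G_0|-\kappa^\ast$ is not too small, i.e.\ $\kappa^\ast\le\tfrac{n+r-1}{2}$. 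Pinning down this count, and thereby the precise constant $\tfrac{n+r-3}{2}$ in place of the weaker $|G_0|-2$ of Lemma \ref{3.2}.3(b) and the $\tfrac{5n}{6}-4$ of Lemma \ref{3.4}, is the delicate part of the argument; the hypothesis that $n$ is not a prime power (so $n\ge 6$) should enter precisely to keep the $\tfrac n4$-type contributions below the required threshold.
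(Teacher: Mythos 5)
Your reduction to bounding $\kappa^\ast=\min\{\mathsf k(A)\mid A\in\mathcal A(G_0),\,\mathsf k(A)>1\}$ via the divisibility $\min\Delta(G_0)\mid\mathsf k(A)-1$ is a sound first step and matches the opening of the paper's argument (which, however, runs it in reverse: it assumes $\min\Delta(G_0)\ge\frac{n+r}{2}-1$ and deduces that every atom of cross number $>1$ has cross number at least $\frac{n+r}{2}$, and moreover that two such cross numbers are either equal or differ by at least $\frac{n+r}{2}-1$ — an extra rigidity your direct approach never obtains and which the paper uses essentially). But the complementation engine does not carry the load you assign to it. Setting $\overline A=\prod_{h\in G_0}h^{\ord(h)-\mathsf v_h(A)}$, the only case in which you get new information is when $\overline A$ is divisible by an atom of cross number $>1$; when $\overline A$ factors entirely into cross-number-one atoms, the product $A\cdot\overline A=\prod_h h^{\ord(h)}$ yields exactly the lengths $|G_0|$ and $1+\mathsf k(\overline A)=1+|G_0|-\mathsf k(A)$, i.e.\ only $\min\Delta(G_0)\le\mathsf k(A)-1$ again — the statement you started from. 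So in the regime you yourself flag as ``the delicate part,'' the method is circular and you have no argument, only the hope that multiplicity restrictions ``should force'' enough splitting. That regime is the entire content of the lemma; the paper resolves it by a genuinely different mechanism: it shows (assertion {\bf A1}) that any atom $U$ of cross number $>1$ can be completed by at most $\frac{n+1}{2}$ cross-number-one atoms $A_1,\ldots,A_m$ so that $UA_1\cdots A_m$ factors into cross-number-one atoms $V_1\cdots V_t$, takes such a configuration with $\mathsf k(U)$ and then $m$ minimal, proves no $V_\nu$ divides $UA_1\cdots A_{m-1}$, deduces $|\supp(A_m)|\ge m+r+1$ on pain of the desired bound, and then iterates ({\bf A3}, {\bf A4}) to replace $A_m$ by atoms of strictly smaller support until this is contradicted. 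Nothing in your proposal substitutes for that iteration.

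Two further gaps are independent of the main one. First, in case (b) you have no bound $|G_0|\le r+\frac n2$ (Lemma \ref{3.4} requires the independence hypothesis of case (a)), so even your ``small cross number'' conclusion $\kappa^\ast\le\frac{|G_0|}{2}$ does not yield $\kappa^\ast\le\frac{n+r-1}{2}$ there; ``an analogous control of $|G_0|$'' is asserted, not proved. Second, in case (a) the verification of the hypotheses of Lemma \ref{3.5} — producing an atom $U$ with $\mathsf k(U)=1$ and $\gcd(\mathsf v_g(U),\ord(g))=1$ — is deferred to a ``fallback'' you do not supply; the paper gets $\mathsf v_g(A)=1$ for some $g$ from the independence assumption in (a) together with Lemma \ref{3.1}. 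As it stands the proposal is a plausible plan for the easy half of the problem and an acknowledged blank for the hard half.
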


\begin{proof}
Assume to the contrary that $\min \Delta (G_0) \ge  \frac{n+r}{2}-1$. Then Lemma \ref{3.2}.3.(b) implies that $|G_0|\ge \frac{n+r}{2}+1$. If  Property (a) is satisfied, then Lemma \ref{3.1}.2 implies that there exists some $g\in G_0$ such that $\mathsf v_g(A)=1$.
By Lemma \ref{3.5}, each of the two Properties (a) and (b) implies that
 $\mathsf k (U) \in \N$ for each $U \in \mathcal A (G_0)$ and
\[
\min \Delta (G_0) \t \gcd \big( \{ \mathsf k (U) - 1 \mid U \in \mathcal A (G_0) \} \big) \,.
\]
We set
\[
\Omega_{=1}=\{A\in \mathcal A (G_0) \mid \mathsf k (A)=1\} \quad \text{and} \quad \Omega_{>1}=\{A\in \mathcal A (G_0) \mid \mathsf k (A)>1\} \,.
\]
Thus for each $U_1, U_2 \in \Omega_{>1}$ we have
\begin{equation} \label{e2}
\mathsf k (U_1)\ge \frac{n+r}{2} \quad \text{and} \quad \big(\text{either} \ \mathsf k (U_1)=\mathsf k (U_2) \ \text{or} \ |\mathsf k (U_1)-\mathsf k (U_2)|\ge  \frac{n+r}{2}-1 \big) \,.
\end{equation}
Furthermore, for each $U \in \Omega_{=1}$ we have $\mathsf h (U) \ge 2$ (otherwise, $U$ would divide every atom $U_1 \in \Omega_{>1}$).
We claim that

\begin{enumerate}
\item[{\bf  A1.}] For each $U\in \Omega_{>1}$,  there are $A_1,  \ldots , A_m \in \Omega_{=1}$,
                  where $m \le \frac{n+1}{2}$,  such that $UA_1 \cdot \ldots \cdot A_m$ can be factorized into a product of atoms from $\Omega_{=1}$.
\end{enumerate}

\medskip

{\it Proof of {\bf  A1.}} \, Suppose that  Property (a) holds. As observed above there exists some $g\in G_0$ such that $\mathsf v_g(A)=1$.  Lemma \ref{3.3} implies that there is an atom $X$ such that
$2\le |\supp(X)|\le \mathsf r(G)+1$ and $1 \le  \mathsf v_g(X) \le  \ord(g)/2$. Since  $g\not\in \langle G_0\setminus \{g,\ h\}\rangle$ for any $h\in G_0\setminus \{g\}$, it follows that $\mathsf v_g (X) \ge 2$, and  $|G_0|\ge  r+2$ implies  $\supp(X)\subsetneq G_0$.

Suppose that Property (b) is satisfied.
We choose an element $g\in G_0\setminus G_2$. Then $g\in \langle G_2 \rangle$ and by Lemma \ref{3.1}.1,  there is an atom $A'$ with
$\mathsf v_g(A')=1$ and $\supp(A')\subset G_2 \cup \{g\}\subsetneq G_0$. This implies that  $A'\in \Omega_{=1}$. Let $h \in G_0$ such that  $\mathsf v_h(A')=\mathsf h(A')$. Since $\mathsf h(A')\ge 2$, we obtain that
$A'^{\lceil\frac{\ord(h)}{\mathsf h(A')}\rceil}=h^{\ord(h)}\cdot W$
where $W$ is a product of $\lceil\frac{\ord(h)}{\mathsf h(A')}\rceil-1$ atoms and $\mathsf v_g(W)=\lceil\frac{\ord(h)}{\mathsf h(A')}\rceil$.
Thus there exists an atom $X'$ with $2\le\mathsf v_g(X')\le \lceil\frac{\ord(h)}{\mathsf h(A')}\rceil\le \frac{n}{2}+1$.

\smallskip
Therefore both properties imply that there are  $A, X \in \mathcal A (G_0)$ and  $g\in G_0$ such that $\mathsf k(A)=\mathsf k(X)=1$, $\mathsf v_{g}(A)=1$,  and $2\le\mathsf v_g(X)\le \frac{n}{2}+1$. Let $U\in \Omega_{>1}$.
\smallskip

If $\ord(g)-\mathsf v_g(U)< \mathsf v_g(X)\le \frac{n}{2}+1$, then
\[
U  A^{\ord(g)-\mathsf v_g(U)}=g^{\ord(g)}  S\,,
\]
where $S \in \mathcal B (G_0)$  and $\ord(g)-\mathsf v_g(U)\le \frac{n}{2}$. Since $\supp (S) \subsetneq G_0$, $S$ is a product of atoms from $\Omega_{=1}$.

If $\ord(g)-\mathsf v_g(U)\ge \mathsf v_g(X)$, then
\[
U X^{\lfloor \frac{\ord(g)-\mathsf v_g(U)}{\mathsf v_g(X)}\rfloor} A^{\ord(g)-\mathsf v_g(U)-\mathsf v_g(X)\cdot \lfloor \frac{\ord(g)-\mathsf v_g(U)}{\mathsf v_g(X)}\rfloor }=g^{\ord(g)}  S\,,
\]
where $S$ is a product of atoms from $\Omega_{=1}$ (because $\supp (S) \subsetneq G_0)$ and
\[
\begin{aligned}
\lfloor  & \frac{\ord(g)-\mathsf v_g(U)}{\mathsf v_g(X)}\rfloor+\ord(g)-\mathsf v_g(U)-\mathsf v_g(X)\cdot \lfloor \frac{\ord(g)-\mathsf v_g(U)}{\mathsf v_g(X)}\rfloor \\
& \le \frac{\big( \ord (g) - \mathsf v_g (U)\big) - \big(\mathsf v_g (X)-1 \big)}{\mathsf v_g (X)} + \mathsf v_g (X)-1 \\
& \le \frac{\ord (g)-\mathsf v_g (U) + 1}{2} \le \frac{n+1}{2}\,. \qquad \qquad \qquad  \qed{\text{\rm (Proof of {\bf  A1})}}
\end{aligned}
\]

\smallskip
We set
\[
\Omega_{>1}'=\{A\in \mathcal A (G_0) \mid \mathsf k (A)= \min \{ \mathsf k (B) \mid B \in \Omega_{>1}\} \}\subset \Omega_{>1} \,,
\]
and we consider all tuples $(U,A_1,\ldots,A_m)$, where $U\in \Omega_{>1}'$ and $A_1,\ldots,A_m\in \Omega_{=1}$,  such that $UA_1 \cdot \ldots \cdot A_m$ can be factorized into a product of atoms from $\Omega_{=1}$. We fix one such tuple $(U,A_1,\ldots,A_m)$ with the property that $m$ is minimal possible.
Let
\begin{equation}\label{e3}
UA_1 \cdot \ldots \cdot A_m = V_1 \cdot \ldots \cdot V_t \quad \text{ with} \quad t \in \N \quad \text{and} \quad  V_1, \ldots, V_t \in \Omega_{=1} \,.
\end{equation}
We observe that  $\mathsf k(U)=t-m$ and continue with the following assertion.

\begin{enumerate}
\item[{\bf  A2. }]  For each $\nu \in [1, t]$, we have $V_{\nu} \nmid UA_1 \cdot \ldots \cdot A_{m-1}$.
\end{enumerate}

{\it Proof of {\bf A2.}} \, Assume to the contrary that there is such a $\nu \in [1, t]$, say $\nu = 1$, with  $V_1 \t U A_1 \cdot \ldots \cdot A_{m-1}$. Then there are $l \in \N$ and $T_1, \ldots, T_l \in \mathcal A (G_0)$ such that
\[
U A_1 \cdot \ldots \cdot A_{m-1} = V_1 T_1 \cdot \ldots \cdot T_{\mathit l} \,.
\]
By the minimality of $m$,  there exists some $\nu \in [1, l]$ such that $T_{\nu} \in \Omega_{>1}$, say $\nu=1$.  Since
\[
\sum_{\nu=2}^l \mathsf k (T_{\nu}) = \mathsf k (U) + (m-1)-1 - \mathsf k (T_1) \le m-2 \le \frac{n-3}{2}\,,
\]
and $\mathsf k (T')\ge\frac{ r+n}{2}$ for all $T' \in \Omega_{>1}$, it follows that  $T_2, \ldots, T_l \in \Omega_{=1}$, whence $l = 1 + \sum_{\nu=2}^l \mathsf k (T_{\nu}) \le m-1$.
We obtain that
\[
V_1 T_1 \cdot \ldots \cdot T_{\mathit l}A_m = U A_1 \cdot \ldots \cdot A_m = V_1 \cdot \ldots \cdot V_t \,,
\] and thus
 \[
 T_1 \cdot \ldots \cdot T_{\mathit l}A_m = V_2 \cdot \ldots \cdot V_t \,.
\]
The minimality of $m$ implies that $\mathsf k(T_1)> \mathsf k(U)$.
It follows that  \[\mathsf k(T_1)-\mathsf k(U)=m-1-{\mathit l}\le m-2\le \frac{n-3}{2}<\frac{r+n}{2}-1\le \mathsf k(T_1)-\mathsf k(U)\,,\]
 a contradiction.  \qed{(Proof of {\bf  A2})}

\medskip
By Equation (\ref{e3}), there are $X_1, Y_1, \ldots,  X_t, Y_t \in \mathcal F (G)$ such that
\[
UA_1\cdot \ldots \cdot A_{m-1}=X_1\cdot\ldots\cdot X_t, \ A_{m}=Y_1\cdot\ldots\cdot Y_t, \ \text{and} \ V_i=X_iY_i \ \text{ for each} \ i\in[1,t] \,.
\]
Then {\bf A2} implies that $|Y_i| \ge 1$ for each $i \in [1,t]$, and we set
$\alpha=|\{i\in [1,t]\mid |Y_i|=1\}|$. If $\alpha\le m+ r$, then
\[
n\ge |A_m|=|Y_1|+\ldots+|Y_t|\ge \alpha+2(t-\alpha)=2t-\alpha\ge 2t-m- r \,,
\]
 and hence $\min \Delta (G_0)\le t-1-m\le \frac{r+n-3}{2}$, a contradiction. Thus
$\alpha\ge m+ r+1$.
After renumbering if necessary we  assume that $1=|Y_1|=\ldots=|Y_{\alpha}|<|Y_{\alpha+1}|\le \ldots\le |Y_t|$.
Let $Y_i=y_i$ for each $i\in[1,\alpha]$ and
\begin{equation}\label{e6}
S_0=\{y_1,y_2,\ldots,y_{\alpha}\}\,.
\end{equation}
For every $i \in [1, \alpha]$, $V_i \t y_iU A_1 \cdot \ldots \cdot A_{m-1}$ whence $\mathsf v_{y_i} (V_i) \le 1 + \mathsf v_{y_i} (U A_1 \cdot \ldots \cdot A_{m-1})$ and since $V_i \nmid U A_1 \cdot \ldots \cdot A_{m-1}$, it follows that
\begin{equation} \label{e7}
\mathsf v_{y_i}(V_i)=\mathsf v_{y_i}(UA_1\cdot\ldots\cdot A_{m-1})+1 \,.
\end{equation}
Assume to the contrary that there are distinct $i,j \in [1, \alpha]$ such that $y_i=y_j$. Then
\[
\mathsf v_{y_i}(UA_1\ \cdot \ldots \cdot  A_{m-1})+1=\mathsf v_{y_i}(V_i)=\mathsf v_{y_i}(X_i)+1 =\mathsf v_{y_i}(V_j)=\mathsf v_{y_i}(X_j)+1 \,.
\]
Since $X_iX_j \t UA_1\ \cdot \ldots \cdot  A_{m-1}$, we infer that
\[
\mathsf v_{y_i}(UA_1\ldots A_{m-1}) \ge \mathsf v_{y_i}(X_iX_j)=\mathsf v_{y_i}(V_iV_j)-2 = 2\mathsf v_{y_i}(UA_1\ldots A_{m-1}) \,,
\]
which implies that  $\mathsf v_{y_i}(UA_1\ldots A_{m-1})=0$, a contradiction to  $\supp(U)=G_0$. Thus $|S_0|=\alpha$ and

\begin{equation}\label{e5}
|\supp(A_m)|\ge |S_0|=\alpha\ge m+ r+1\,.
\end{equation}

 We proceed by the following two assertions.

\begin{enumerate}
\item[{\bf A3. }]  There exist   $g'\in G_0$ and $A' \in \mathcal A (G_0)$ with $\mathsf k(A')=1$ satisfying the following three conditions:
                   \begin{enumerate}
                   \item[{(C1) }] \  $\mathsf v_{g'}(A') < \ord (g')$ is the smallest positive integer $\gamma$ such that  $\gamma g'\in \langle\supp(A')\setminus\{g'\}\rangle$;
                   \item[{(C2) }] \  $\mathsf v_{g'}(A')g'\notin \langle E\rangle$ for any $E\subsetneq \supp(A')\setminus\{g'\})$.
                   \item[{(C3) }] \  $UA_1\cdot\ldots\cdot A_{m-1}\cdot A'$ can be factorized into a product of atoms from $\Omega_{=1}$.
                   \end{enumerate}
\end{enumerate}

{\it Proof of {\bf A3.}} \, Suppose that  Property (a) is  satisfied. As observed at the beginning of the proof, there is a  $g\in G_0$ such that $\mathsf v_g(A)=1$. We choose $A'=A$ and $g'=g$, and we  need to prove that $UA_1\cdot\ldots\cdot A_{m-1}\cdot A$ can be factorized into a product of atoms from $\Omega_{=1}$. Since $S_0\subset \supp(A)=G_0$, then $V_1\cdot\ldots\cdot V_{\alpha}\t UA_1\cdot\ldots\cdot A_{m-1}\cdot A$ and hence $\mathsf k(UA_1\cdot\ldots\cdot A_{m-1}\cdot A(V_1\cdot\ldots\cdot V_{\alpha} )^{-1})<\mathsf k(U)$. The minimality of $\mathsf k(U)$ implies that $UA_1\cdot\ldots\cdot A_{m-1}\cdot A$ can be factorized into a product of atoms from $\Omega_{=1}$.

Suppose that  Property (b) satisfied. We choose $g'=y_1$(see  Equation \eqref{e6}) and distinguish two cases.
First, suppose that there exists a subset $E\subsetneq G_0\setminus\{y_1\}$ such that $y_1\in \langle E\rangle$. Choose a minimal subset $E$ with this property.  By Lemma \ref{3.1}.1,  there exists an atom $A'$ satisfying the two conditions (C1) and (C2) with $\mathsf k(A')=1$ and $\mathsf v_{y_1}(A')=1$.
Since $\mathsf v_{y_1}(V_1)=\mathsf v_{y_1}(UA_1\cdot\ldots\cdot A_{m-1})+1$ by Equation \ref{e7} and $V_1\t UA_1\cdot\ldots\cdot A_{m-1}\cdot y_1$, we obtain that $|\supp(UA_1\cdot\ldots\cdot A_{m-1}\cdot A'(V_1)^{-1})|<|G_0|$ and hence $UA_1\cdot\ldots\cdot A_{m-1}\cdot A'$ can be factorized into a product of atoms from $\Omega_{=1}$.

Now we suppose  that $y_1\notin \langle E\rangle$ for any $E\subsetneq G_0\setminus\{y_1\}$.
Let $s_0 \in \N$  be minimal such that there exists a subset $E\subsetneq G_0\setminus\{y_1\}$ such that $s_0 y_1\in \langle E\rangle$, and  by Lemma \ref{3.3}.1, we observe that $s_0 < \ord (g)$.
Let $E$  be a minimal subset with this property. Thus, by Lemma \ref{3.1}.1,  there exists an atom $A'$ with $\mathsf v_{y_1}(A')=s_0$ satisfying the two conditions (C1) and (C2).  Since $\supp (A') \subsetneq G_0$, we have $\mathsf k(A')=1$. We distinguish two cases:

\medskip
\noindent
CASE 1: \ $ |S_0\setminus \supp(A')|\ge r+1$.

Since $s_0 \ge 2$, there is a  prime $p$ dividing $s_0$.
Since by assumption, $y_1 \in \langle G_0 \setminus \{y_1\} \rangle$,
Lemma \ref{3.3}  implies that  for each prime $p$ dividing $\ord(y_1)$, there exists an atom $A_p'$ such that $|\supp(A_p')|\le  r+1<|G_0|$, $1 \le \mathsf v_{y_1} (A_p') \le \ord (y_1)/2$, and $p\nmid \mathsf v_{y_1}(A_p')$.

Let $d=\gcd(s_0,\mathsf v_{y_1}(A_{p}'))$. Then $d<s_0$ and $dy_1\in \langle s_0 y_1, \mathsf v_{y_1}(A_{p}')y_1\rangle\subset \langle (\supp(A')\cup \supp(A_{p}')) \setminus \{y_1\}\rangle$. By the minimality of $s_0$, we have $G_0\setminus\{y_1\}=(\supp(A')\cup \supp(A_{p}'))\setminus \{y_1\}$. It follows that
\begin{align*}
|\supp(A')|+r&\ge|\supp(A')|+|\supp(A_{p}')|-1\ge |G_0|\\
&\ge |\supp(A')|+|S_0\setminus\supp(A')|\ge |\supp(A')|+r+1\,,
\end{align*}
 a contradiction.

\medskip
\noindent
CASE 2: \ $ |S_0\setminus \supp(A')|\le r$.
\medskip

Therefore $|\supp(A')\cap S_0|\ge m+1$ by Equation \eqref{e5}, and we may suppose that $\{y_1,\ldots,y_{m+1}\}\subset \supp(A')\cap S_0$.
Then $V_1\cdot\ldots\cdot V_{m+1}\t UA_1\cdot\ldots\cdot A_{m-1}A'$ and $\mathsf k (UA_1\cdot\ldots\cdot A_{m-1}A'(V_1\cdot\ldots\cdot V_{m+1})^{-1})<\mathsf k(U)$. By the minimality of $\mathsf k(U)$, we have that $UA_1\cdot\ldots\cdot A_{m-1}A'$ can be factorized into a product of atoms from $\Omega_{=1}$.
\qed{(Proof of {\bf A3})}

\medskip
\begin{enumerate}
\item[{\bf A4. }]  Let $g'\in G_0$ and $A' \in \mathcal A (G_0)$ with $\mathsf k(A')=1$ satisfying the following three conditions:
                   \begin{enumerate}
                   \item[{(C1) }] \  $\mathsf v_{g'}(A') < \ord (g')$ is the smallest positive integer $\gamma$ such that  $\gamma g'\in \langle\supp(A')\setminus\{g'\}\rangle$;
                   \item[{(C2) }] \  $\mathsf v_{g'}(A')g'\notin \langle E\rangle$ for any $E\subsetneq \supp(A')\setminus\{g'\})$.
                   \item[{(C3) }] \  $UA_1\cdot\ldots\cdot A_{m-1}\cdot A'$ can be factorized into a product of atoms from $\Omega_{=1}$.
                   \end{enumerate}
                   If  $|\supp(A')| \ge m+ r+1$, then there exists an atom $A'' \in \mathcal A (G_0)$ with $\mathsf k (A'')=1$ and $|\supp(A'')|<|\supp(A')|$  such that (C1), (C2), and (C3) hold.
\end{enumerate}

Suppose that {\bf A4} holds.
Iterating {\bf A4} we find an atom $A^*$ with $|\supp(A^*)|\le m+ r$ such that $UA_1\cdot\ldots\cdot A_{m-1}\cdot A^*$ can be factorized into a product of atoms from $\Omega_{=1}$, a contradiction to  \eqref{e5}.

\medskip
{\it Proof of {\bf A4.}} \,  For simplicity of notation, we suppose that  $A'=A_m$.

Let $s_0 \in \N$  be minimal such that there exists a subset $E \subsetneq \supp(A_m)\setminus\{g'\}$ such that $s_0 g'\in \langle E\rangle$. By (C1) and $|\supp (A')| \ge m+r+1\ge r+2$, Lemma \ref{3.3} implies that $s_0 < \ord (g')$.
Let $E$ be a minimal subset with this property.
Thus, by Lemma \ref{3.1}.1,  there exists an atom $A''$ with $\mathsf v_{g'} (A'')=s_0$ satisfying the two conditions (C1) and (C2).
Since $\supp (A'') \subsetneq G_0$, we have $\mathsf k (A'')=1$.
We distinguish two cases:

\medskip
\noindent
CASE 1: $ |S_0\setminus \supp(A'')|\ge r+1$.
\medskip

We set $s'=\mathsf v_{g'}(A_m) < \ord (g')$.
 Since $A_m$ satisfies   condition (C1), Lemma \ref{3.1}.1 implies  that $s'\t s_0$ and $\frac{s_0}{s'}>1$. Let $p$ be a prime dividing $\frac{s_0}{s'}$. Since $s' \t \ord (g')$ and $s_0 \t \ord (g')$, it follows that $p \t \frac{s_0}{s'} \t \frac{\ord (g')}{s'} =  \ord (s' g')$.
 Lemma \ref{3.3} (applied to the subset $\supp (A_m) \subset G$) implies  that  there exists an atom $A_p'\in \mathcal A(\supp(A_m))$ such that $|\supp(A_p')|\le r+1<|\supp(A_m)|$, $s' \le \mathsf v_{g'} (A_p') \le \ord (g')/2$, and $p\nmid \frac{\mathsf v_{g'}(A_p')}{s'}$.

Let $d=\gcd(\frac{s_0}{s'},\frac{\mathsf v_{g'}(A_{p}')}{s'})$. Then $d<\frac{s_0}{s'}$ and
\[
ds'g'\in \langle s_0 g', \mathsf v_{g'}(A_{p}')g'\rangle\subset \langle (\supp(A'')\cup \supp(A_{p}') ) \setminus \{g'\}\rangle \,.
\]
Thus by minimality of $s_0$, we have $\supp(A_m)\setminus\{g'\}= (\supp(A'')\cup \supp(A_{p}') )\setminus \{g'\}$.
It follows that
\begin{align*}
|\supp(A'')|+ r&\ge|\supp(A'')|+|\supp(A_{p}')|-1\ge |\supp(A_m)|\\
&\ge |\supp(A'')|+|S_0\setminus\supp(A'')|\ge |\supp(A'')|+ r+1\,,
\end{align*}
 a contradiction.

\medskip
\noindent
CASE 2: \, $ |S_0\setminus \supp(A'')|\le r$.
\medskip

 Therefore $|\supp(A'')\cap S_0|\ge m+1$ by Equation \eqref{e5}, and we may suppose  that $\{y_1,\ldots,y_{m+1}\}\subset \supp(A'')\cap S_0$.
Then $V_1\cdot\ldots\cdot V_{m+1}\t UA_1\cdot\ldots\cdot A_{m-1}A''$ and $\mathsf k (UA_1\cdot\ldots\cdot A_{m-1}A''(V_1\cdot\ldots\cdot V_{m+1})^{-1})<\mathsf k(U)$. By the minimality of $\mathsf k(U)$, we have that $UA_1\cdot\ldots\cdot A_{m-1}A''$ can be factorized into a product of atoms from $\Omega_{=1}$.  This completes the proof of ({\bf A4}) and thus Lemma \ref{3.6} is proved.
\end{proof}

\medskip
\begin{proposition} \label{3.7}
We have $\mathsf m(G) \le \min\{\frac{n}{2}+r-2,\max\{r-1, \frac{5n}{6}-4, \frac{n+r-3}{2}\}\}$.
\end{proposition}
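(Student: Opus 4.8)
The plan is to bound $\min \Delta(G_0)$ for an arbitrary \text{LCN}-set $G_0$ with $\Delta(G_0)\ne\emptyset$ by the claimed quantity; since $\mathsf m(G)$ is the maximum of these values, this suffices. First I would reduce to minimal non-half-factorial sets: inside any non-half-factorial $G_0$ choose a minimal non-half-factorial subset $G_1$, which is again an \text{LCN}-set (an atom over $G_1$ is an atom over $G_0$, so the cross-number condition is inherited), and since $\mathcal L(G_1)\subseteq\mathcal L(G_0)$ we get $\min\Delta(G_0)=\gcd\Delta(G_0)\mid\gcd\Delta(G_1)=\min\Delta(G_1)$, hence $\min\Delta(G_0)\le\min\Delta(G_1)$. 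Next, applying Lemma \ref{3.1}.3 to $G_1$ produces a cross-number preserving transfer homomorphism onto a minimal non-half-factorial set $G_1^*$ of the same cardinality with $g\in\langle G_1^*\setminus\{g\}\rangle$ for every $g\in G_1^*$; since the transfer homomorphism maps atoms onto atoms and preserves cross numbers (Lemma \ref{3.1}.3(b)) it keeps $G_1^*$ an \text{LCN}-set, and since it preserves sets of lengths we have $\min\Delta(G_1)=\min\Delta(G_1^*)$. Thus it is enough to bound $\min\Delta(G_0)$ for a minimal non-half-factorial \text{LCN}-set $G_0$ satisfying $g\in\langle G_0\setminus\{g\}\rangle$ for all $g\in G_0$.

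I would dispose of the prime-power case at the outset. If $n=\exp(G)$ is a prime power then $G$ is a $p$-group, so Proposition \ref{2.2}.2 gives $\mathsf m(G)=\mathsf r(G)-1=r-1$; this is dominated by $\max\{r-1,\tfrac{5n}{6}-4,\tfrac{n+r-3}{2}\}$ trivially and by $\tfrac n2+r-2$ since $n\ge 2$, so the bound holds. From now on assume $n$ is not a prime power, which is exactly the hypothesis needed to invoke Lemma \ref{3.6}.

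Now I would run a short case analysis on the normalized $G_0$ above, verifying in each case \emph{both} inequalities $\min\Delta(G_0)\le\tfrac n2+r-2$ and $\min\Delta(G_0)\le\max\{r-1,\tfrac{5n}{6}-4,\tfrac{n+r-3}{2}\}$, so that the minimum of the two is obtained. If $|G_0|\le r+1$, Lemma \ref{3.2}.3(b) gives $\min\Delta(G_0)\le|G_0|-2\le r-1$, which is dominated by both quantities (using $n\ge 2$). So assume $|G_0|\ge r+2$ and split according to the property (P): for every $h\in G_0$ and every $h'\in G_0\setminus\{h\}$ one has $h\notin\langle G_0\setminus\{h,h'\}\rangle$. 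If (P) holds, Lemma \ref{3.4} yields $|G_0|\le r+\tfrac n2$, whence $\min\Delta(G_0)\le|G_0|-2\le\tfrac n2+r-2$; for the second inequality I distinguish whether some full-support atom has cross number $1$: if every full-support atom has cross number $>1$ the ``in particular'' clause of Lemma \ref{3.4} gives $\min\Delta(G_0)\le\tfrac{5n}{6}-4$, while if some full-support atom has cross number $1$ then hypothesis (a) of Lemma \ref{3.6} is met and yields $\min\Delta(G_0)\le\tfrac{n+r-3}{2}$. If (P) fails there are $h\ne h'$ with $h\in\langle G_0\setminus\{h,h'\}\rangle$; then $G_2:=G_0\setminus\{h,h'\}$ satisfies $\langle G_2\rangle=\langle G_0\rangle$ (because $h\in\langle G_2\rangle$ and $h'\in\langle G_0\setminus\{h'\}\rangle=\langle G_2\cup\{h\}\rangle=\langle G_2\rangle$) with $|G_2|=|G_0|-2$, so hypothesis (b) of Lemma \ref{3.6} holds and gives $\min\Delta(G_0)\le\tfrac{n+r-3}{2}$. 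The elementary inequality $\tfrac{n+r-3}{2}\le\tfrac n2+r-2$ (valid for $r\ge 1$) then shows that the bound $\tfrac{n+r-3}{2}$ simultaneously settles both required inequalities, completing every case.

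The genuine mathematical content has already been packaged into Lemmas \ref{3.4} and \ref{3.6}, so the work here is organizational, and the main thing to get right is matching hypotheses: checking that the transfer-homomorphism normalization of Lemma \ref{3.1}.3 preserves both the \text{LCN}-property and $\min\Delta$, and that the dichotomy ``(P) or not (P)'' lands precisely on the hypotheses of Lemma \ref{3.4} and of Lemma \ref{3.6}(b). The one place demanding a little care is ensuring that in each branch I really obtain \emph{both} halves of the target minimum rather than only one; this is where the inequality $\tfrac{n+r-3}{2}\le\tfrac n2+r-2$ and the unconditional cardinality bound $|G_0|\le r+\tfrac n2$ from Lemma \ref{3.4} (available whenever (P) holds) do the reconciling.
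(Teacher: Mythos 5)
Your proposal is correct and follows essentially the same route as the paper: reduce to a minimal non-half-factorial LCN-set normalized via Lemma \ref{3.1}.3, dispose of the prime-power and $|G_0|\le r+1$ cases, and then split on exactly the dichotomy the paper uses (your property (P) is the paper's ``every $(|G_0|-1)$-subset is a minimal generating set''), invoking Lemma \ref{3.4} and the two hypotheses of Lemma \ref{3.6} in the same way. You are in fact slightly more explicit than the paper in checking that both halves of the minimum are obtained in each branch (e.g.\ noting $\frac{n+r-3}{2}\le\frac n2+r-2$), which the paper leaves implicit.
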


\begin{proof}
Let $G_0 \subset G$ be a non-half-factorial LCN set. We have to prove that
\[
\min \Delta(G_0)\le \min\{\frac{n}{2}+r-2,\max\{r-1, \frac{5n}{6}-4, \frac{n+r-3}{2}\}\}\,.
\]
If $G_1 \subset G_0$ is non-half-factorial, then $\min \Delta (G_0) = \gcd \Delta (G_0) \t \gcd \Delta (G_1) = \min \Delta (G_1)$. Thus we may suppose that $G_0$ is minimal non-half-factorial. By Lemma \ref{3.1}.3.(a), we may suppose that $g \in \langle G_0 \setminus \{g\} \rangle$ for all $g \in G_0$.

If $n$ is a prime power, then $\mathsf m(G)=r-1$ by Proposition \ref{2.2}, and the assertion follows. Suppose that $n$ is not a prime power.
If $|G_0| \le r+1$, then $\min \Delta (G_0) \le |G_0|-2\le r-1$ by Lemma \ref{3.2}.3. Thus we may suppose that  $|G_0| \ge r+2$ and we distinguish two cases.

\smallskip
\noindent
CASE 1: \, There exists a subset $G_2 \subset G_0$ such that $\langle G_2 \rangle = \langle G_0 \rangle$ and $|G_2| \le |G_0|-2$.

Then Lemma \ref{3.6} implies that $\min \Delta (G_0)\le \frac{n+r-3}{2}$.

\smallskip
\noindent
CASE 2: \, Every subset $G_1\subset G_0$ with $|G_1|=|G_0|-1$ is a minimal generating set of $\langle G_0 \rangle$.

Then for each $h \in G_0$, $G_0\setminus \{h\}$ is half-factorial and $h \notin \langle G_0 \setminus \{h, h' \} \rangle \ \text{for any } \ h'\in G_0\setminus\{h\}$. Thus Lemma \ref{3.4} and Lemma \ref{3.6} imply that $\min \Delta(G_0)\le \max\{\frac{5n}{6}-4, \frac{n+r-3}{2}\}$. By Lemma \ref{3.4}, we obtain that $|G_0|\le r+\frac{n}{2}$. Therefore Lemma \ref{3.2}.3 implies that $\min \Delta(G_0)\le \min \{ r+\frac{n}{2}-2, \max \{ \frac{5n}{6}-4, \frac{n+r-3}{2} \}  \}$.
\end{proof}

\medskip
\begin{proposition} \label{3.8}
Let $G'$ be a finite abelian group with $\mathcal L(G)=\mathcal L(G')$.  If $r \in [2, (n-2)/4]$,  then $n=\exp(G')> \mathsf r(G')+1$.
\end{proposition}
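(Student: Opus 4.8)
The plan is to extract from the hypothesis $\mathcal L(G)=\mathcal L(G')$ only the invariant $\max\Delta^*(\cdot)$ and the fine structure of $\Delta_1(\cdot)$ near its maximum, and to show that the whole assertion reduces to the single inequality $\mathsf r(G')\le n-2$. Indeed, by Proposition \ref{2.2}.2 applied to $G'$ we have $\max\Delta^*(G')=\max\{\exp(G')-2,\mathsf r(G')-1\}$. Thus, once we know $\max\Delta^*(G')=n-2$ together with $\mathsf r(G')\le n-2$, the term $\mathsf r(G')-1\le n-3$ is strictly dominated, which forces $\exp(G')-2=n-2$, i.e.\ $\exp(G')=n$, and simultaneously $\mathsf r(G')+1\le n-1<n$. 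This is exactly the desired conclusion. Hence everything comes down to (i) computing $\max\Delta^*(G')=n-2$ and (ii) excluding $\mathsf r(G')=n-1$.

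For (i) I would first compute $\max\Delta^*(G)$. By Proposition \ref{2.2}.2, $\max\Delta^*(G)=\max\{n-2,\mathsf r(G)-1\}=\max\{n-2,r-1\}$, and since $r\le(n-2)/4$ gives $r-1<n-2$, this equals $n-2$. Because $\max\Delta^*(\cdot)=\max\Delta_1(\cdot)$ by Proposition \ref{2.2}.1, and $\Delta_1(G)=\Delta_1(G')$ is determined by $\mathcal L(G)=\mathcal L(G')$, I then obtain $\max\Delta^*(G')=\max\Delta_1(G')=\max\Delta_1(G)=n-2$.

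For (ii) I argue by contradiction: suppose $\mathsf r(G')=n-1$. Then $[1,\mathsf r(G')-1]=[1,n-2]$ lies in $\Delta_1(G')$ by the lower bound in Proposition \ref{2.2}.3, and since $\max\Delta_1(G')=n-2$ we conclude $\Delta_1(G')=[1,n-2]$, so that $\Delta_1(G)=[1,n-2]$ is a full interval with no gaps. On the other hand, the upper bound in Proposition \ref{2.2}.3 for $G$ (with $k\le r$ the largest integer such that $C_n^k\le G$) confines $\Delta_1(G)$ to $[1,M]\cup[\,n-k-1,n-2\,]$, where $M=\max\{\mathsf m(G),\lfloor n/2\rfloor-1\}\le\frac{n}{2}+r-2$ by Proposition \ref{3.7}. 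The hypothesis $r\le(n-2)/4$ yields $n/2>2r$, whence $n-k-1\ge n-r-1>\frac{n}{2}+r-1\ge M+1$; moreover $M+1\le\frac{n}{2}+r-1\le n-2$. Thus $M+1$ lies in $[1,n-2]$ but in neither of the two intervals, so $M+1\notin\Delta_1(G)$, contradicting $\Delta_1(G)=[1,n-2]$. This rules out $\mathsf r(G')=n-1$, and together with $\mathsf r(G')-1\le\max\Delta^*(G')=n-2$ it leaves only $\mathsf r(G')\le n-2$.

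The main obstacle is step (ii): it is precisely the quantitative comparison of the two descriptions of $\Delta_1(G)$ that forces a gap, and the role of the hypothesis $r\le(n-2)/4$ is exactly to guarantee $n/2>2r$, which separates the small-distance block $[1,M]$ from the top block $[n-k-1,n-2]$. If $r$ were allowed to be as large as $n/4$, this separation would fail and $\Delta_1(G)$ could be a full interval, so the bound on $r$ cannot simply be dropped. All the remaining steps are routine arithmetic manipulations of the identities recorded in Proposition \ref{2.2}.
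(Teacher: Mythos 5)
Your proof is correct and follows essentially the same route as the paper: both arguments combine Proposition \ref{3.7} with the two-block upper bound of Proposition \ref{2.2}.3 to exhibit an integer of $[1,n-2]$ missing from $\Delta_1(G)=\Delta_1(G')$, and then use it to rule out $\mathsf r(G')-1=\max\Delta_1(G')=n-2$, leaving $\exp(G')-2=n-2$. The only cosmetic difference is the choice of gap element (you take $M+1$ just above the lower block, while the paper takes $n-k-2$ just below the upper block).
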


\begin{proof}
Let $k\in \N$ be maximal such that $G$ has a subgroup isomorphic to $C_{n}^k$. Then $k\le r\le \frac{n-2}{4}\le \frac{n-3}{2}$. By Proposition \ref{3.7}, we obtain that
\[
\mathsf m(G)\le \frac{n}{2}+r-2\le n-2 \le n-k-3 \,
\]
and hence
\[
\max \{ \mathsf m(G), \lfloor \frac{n}{2}\rfloor-1 \} \le n-k-3\,.
\]
By Proposition \ref{2.2}.3,  we have
\[
\Delta_1(G)\subset [1,\max \{ \mathsf m(G), \lfloor \frac{n}{2}\rfloor-1 \}] \cup [n-k-1,n-2]\,,
\]
and thus $n-k-2\not\in \Delta_1(G)$. Thus  $n-k-2\not\in \Delta_1(G')$ and Proposition \ref{2.2} implies that
\[
n-2= \max \{ \mathsf m(G), n-2 \} = \max \Delta_1 (G) = \max \Delta_1 (G') = \max \{\mathsf r(G')-1, \exp(G')-2 \} \,.
\]
If $n-2=\mathsf r(G')-1$, then $\Delta_1(G')=[1,n-2]$ by Lemma \ref{3.2}.2, a contradiction to $n-k-2\not\in \Delta_1(G')$.
Therefore it follows that  $n=\exp(G')>\mathsf r(G')+1$.
\end{proof}

\medskip
\section{Proof of the Main Result and groups with small Davenport constant} \label{4}
\medskip

\medskip
\begin{proof}[Proof of Theorem \ref{1.1}]
Let $G$ be an abelian group such that $\mathcal L (G) = \mathcal L ( C_n^r)$ where $r, n \in \N$ with $n \ge 2$, $(n,r)\notin \{(2,1),(2,2), (3,1)\}$, and $r \le \max \{2, (n+2)/6 \}$.

First we note that $G$ has to be finite and that $\mathsf D(C_n^r)=\mathsf D(G)$ and $\Delta_1(C_n^r)=\Delta_1(G)$ (see \cite[Proposition 7.3.1 and Theorem 7.4.1]{Ge-HK06a}).
If $r=1$, then the assertion follows from \cite[Theorem 7.3.3]{Ge-HK06a}. If $r = 2$, then the assertion follows from \cite{Sc09c}, and hence we may suppose that $r \in [3, (n+2)/6]$.

Let $k\in \N$ be maximal such that $G$ has a subgroup isomorphic to $C_{n}^k$.  If $k \ge r$, then  $\mathsf D(C_n^r)=\mathsf D(G) \ge \mathsf D (C_n^k)$ implies that $k=r$ and that $G\cong C_n^r$. Suppose that $k <r$.
By Proposition \ref{3.8}, we obtain that $n=\exp(G)>\mathsf r(G)+1$.
By Proposition \ref{2.2}.3 (applied to $C_n^r$) we infer  that $[n-r-1,n-2]\subset \Delta_1(C_n^r) = \Delta_1(G)$.
 By Proposition \ref{2.2}.3 (applied to $G$), we obtain that
 \[
 [1,\mathsf r(G)-1] \cup  [n-r-1,n-2]\subset \Delta_1(G)\subset \big[1,\max \big\{\mathsf m(G), \lfloor \frac{n}{2}\rfloor-1 \big\} \big] \cup [n-k-1,n-2]\,,
 \]
which implies  that $\mathsf m(G)\ge n-r-1$. By Proposition \ref{3.7}, we have that
\[
n-r-1\le \mathsf m(G) \le \max \Big\{\mathsf r(G)-1, \frac{5n}{6}-4,\frac{n+\mathsf r(G)-3}{2} \Big\}\,.
\]

If $n-r-1\le \frac{5n}{6}-4$, then $r\ge \frac{n}{6}+3$, a contradiction. Thus $n-r-1\le \max \big\{\mathsf r(G)-1, \frac{n+\mathsf r(G)-3}{2} \big\}$ which implies that $\mathsf r(G)\ge n-2r+1$. Therefore
\begin{equation}\label{eq1}
[1,n-2r]\subset [1,\mathsf r(G)-1]\subset \Delta_1(G)=\Delta_1(C_n^r)\,.
\end{equation}
Since $r\le \frac{n+2}{6}$, we have that $n-2r-1\ge \frac{n}{2}+r-2\ge \lfloor \frac{n}{2}\rfloor-1$. By Proposition \ref{3.7}, we obtain that $\mathsf m(C_n^r)\le \frac{n}{2}+r-2\le n-2r-1$. Therefore
\[
\max \{ \mathsf m(C_n^r), \lfloor \frac{n}{2}\rfloor-1 \} < n-2r < n-r-1\,.
\]
 By Proposition \ref{2.2}.3, $n-2r\notin \Delta_1 (C_n^r)$, a contradiction to Equation \eqref{eq1}.
\end{proof}

\smallskip
Our proof of Theorem \ref{1.1}, to characterize the groups $C_n^r$ with $r, n$ as above, uses only the Davenport constant and the set of minimal distances. Clearly, there are non-isomorphic groups $G$ and $G'$ with $\mathsf D (G)=\mathsf D (G')$, $\Delta^* (G)=\Delta^* (G')$, and $\Delta_1 (G) = \Delta_1 (G')$. We meet this phenomenon in Proposition \ref{4.1}. Indeed, since $\mathcal L (C_1)=\mathcal L (C_2)$ and $\mathcal L (C_3)=\mathcal L (C_2 \oplus C_2)$ (\cite[Theorem 7.3.2]{Ge-HK06a}), small groups definitely deserve a special attention when studying the Characterization Problem. Clearly, the groups $C_1, C_2, C_3$, and $C_2 \oplus C_2$ are precisely the groups $G$ with $\mathsf D (G) \le 3$. In our final result we show that for all groups $G$ with $\mathsf D (G) \in [4,11]$ the  answer to the Characterization Problem is positive.

Suppose that $G \cong C_{n_1} \oplus \ldots \oplus C_{n_r}$ where $r \in \N_0$ and $1 < n_1 \t \ldots \t n_r$ and  set $\mathsf D^* (G) = 1 + \sum_{i=1}^r (n_i-1)$. Then $\mathsf D^* (G) \le \mathsf D (G)$. If   $\mathsf r (G)=r \le 2$ or if $G$ is a $p$-group, then equality holds.

\medskip
\begin{proposition} \label{4.1}
Let $G$ be a finite abelian group with $\mathsf D (G) \in [4,11]$. If $G'$ is a finite abelian group with $\mathcal L (G) = \mathcal L (G')$, then $G \cong G'$.
\end{proposition}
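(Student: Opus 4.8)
The plan is to separate all finite abelian groups with Davenport constant in $[4,11]$ from one another using only the two invariants that $\mathcal L(G)$ determines: the Davenport constant $\mathsf D(G)$ and the set $\Delta_1(G)$. Exactly as in the proof of Theorem \ref{1.1}, the hypothesis $\mathcal L(G)=\mathcal L(G')$ forces $\mathsf D(G)=\mathsf D(G')$ and $\Delta_1(G)=\Delta_1(G')$, and hence by Proposition \ref{2.2} also $\max\Delta^*(G)=\max\Delta^*(G')=\max\{\exp-2,\mathsf r-1\}$. In particular $\mathsf D(G')\in[4,11]$, so it suffices to run through the finite list of groups with Davenport constant in this range and show that $(\mathsf D,\Delta_1)$ takes pairwise distinct values.

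First I would dispose of the groups of rank at most two. If $\mathsf r(G)\le 2$, then $\mathsf D(G)\ge 4$ rules out the two exceptional pairings $\mathcal L(C_1)=\mathcal L(C_2)$ and $\mathcal L(C_3)=\mathcal L(C_2^2)$ (both with $\mathsf D\le 3$), and the characterization theorems for cyclic groups (\cite[Theorem 7.3.3]{Ge-HK06a}) and for groups of rank two (\cite{Ge-Sc15b, Sc09c}) yield $G'\cong G$ at once. So I may assume $\mathsf r(G)\ge 3$. In that case $\mathsf r(G')\ge 3$ as well: if $G'$ were cyclic or of rank two with $\mathsf D(G')=\mathsf D(G)\ge 4$, then applying the same theorems to $G'$ would give $G\cong G'$ and hence $\mathsf r(G)\le 2$, a contradiction. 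This reduces everything to separating the groups of rank $\ge 3$ with $\mathsf D\in[4,11]$.

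Next I would enumerate these groups. Since $\mathsf D(G)\ge \mathsf D^*(G)=1+\sum(n_i-1)$, the bound $\mathsf D\le 11$ forces $\sum(n_i-1)\le 10$, leaving only finitely many candidates; for the $p$-groups among them one has $\mathsf D=\mathsf D^*$ and $\mathsf m=\mathsf r-1$ by Proposition \ref{2.2}.2, while for the few non-$p$-groups of rank $\ge 3$ in the range the exact Davenport constant must be decided separately. For each surviving group I would then pin down $\Delta_1$ from Proposition \ref{2.2}.3, squeezing it between the lower bound $[1,\mathsf r-1]\cup[\max\{1,n-k-1\},n-2]\subset\Delta_1$ and the upper bound $\Delta_1\subset[1,\max\{\mathsf m,\lfloor n/2\rfloor-1\}]\cup[n-k-1,n-2]$, where the control of $\mathsf m(G)$ comes from Proposition \ref{3.7} and the lower-bound containment $[1,\mathsf r-1]\subset\Delta^*\subset\Delta_1$ from Lemma \ref{3.2}.2. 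Tabulating $(\mathsf D,\Delta_1)$ over the list should show that the rank-$\ge 3$ groups are pairwise separated; notably, the genuine $(\mathsf D,\Delta_1)$-collisions that survive in the full range, such as $C_4$ against $C_2^3$ at $\mathsf D=4$, or $C_4^2$ against $C_3^3$ at $\mathsf D=7$, always pair a rank-$\ge 3$ group with a cyclic or rank-two group, and hence have already been eliminated by the previous step.

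The hard part will be twofold. First, for the non-$p$-groups of rank $\ge 3$ with $\mathsf D^*\le 11$ one cannot simply write $\mathsf D=\mathsf D^*$, so deciding whether their Davenport constant actually lies in $[4,11]$, and with what value, is a delicate point that has to be handled group by group. Second, the separation rests on computing $\Delta_1$ exactly, which in the borderline cases where the two bounds of Proposition \ref{2.2}.3 leave a gap requires the sharp value of $\mathsf m(G)$ rather than merely the estimate of Proposition \ref{3.7}; precisely these cases (for instance groups of the shape $C_2^a\oplus C_3$, where the interval $[1,n-2]$ could either be complete or have an interior gap) are where a hidden rank-$\ge 3$ versus rank-$\ge 3$ collision would have to be excluded, and confirming its absence for every value $\mathsf D\in[4,11]$ is the crux of the argument.
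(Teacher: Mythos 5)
Your framework---reduce to $\mathsf D(G)=\mathsf D(G')$ and $\Delta_1(G)=\Delta_1(G')$, dispose of rank $\le 2$ via the known characterization theorems, then enumerate the rank-$\ge 3$ groups with $\mathsf D\in[4,11]$ and separate them by $(\mathsf D,\Delta_1)$---matches the paper's strategy up to the last step, and indeed the paper settles $\mathsf D\in[4,10]$ by citation and distinguishes most of the $\mathsf D=11$ candidates simply by $\max\Delta^*=\max\{\exp(G)-2,\mathsf r(G)-1\}$ (which takes the values $5,7,6$ on $C_2^4\oplus C_4^2$, $C_2^7\oplus C_4$, $C_2^3\oplus C_8$). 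But your program cannot be completed: the rank-$\ge 3$ versus rank-$\ge 3$ collision whose absence you call ``the crux'' actually occurs. The groups $C_2^4\oplus C_6$ and $C_3^5$ both have Davenport constant $11$, and Proposition \ref{2.2} gives $\Delta^*(C_2^4\oplus C_6)=[1,4]=\Delta^*(C_3^5)$; since $[1,4]$ is divisor-closed, Proposition \ref{2.2}.1 then forces $\Delta_1=[1,4]$ for both groups. The paper explicitly flags this phenomenon (non-isomorphic groups sharing $\mathsf D$, $\Delta^*$, and $\Delta_1$) in the paragraph preceding Proposition \ref{4.1}. No sharper computation of $\mathsf m(G)$ or tightening of the bounds in Proposition \ref{2.2}.3 can rescue the plan for this pair, because the two invariants you rely on genuinely coincide.

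What is missing is a new invariant extracted from $\mathcal L(G)$ beyond $\mathsf D$ and $\Delta_1$. The paper supplies one: it shows that $\{2,8\}\in\mathcal L(C_2^4\oplus C_6)$ (by \cite[Theorem 6.6.2]{Ge-HK06a}) while $\{2,8\}\notin\mathcal L(C_3^5)$, and the latter statement occupies the bulk of the proof---a delicate zero-sum argument that takes a hypothetical pair of atoms $U,V$ over $C_3^5$ with $\mathsf L(UV)=\{2,8\}$ (so $|U|,|V|\ge 8$), normalizes it, produces an alternative factorization of $UV$ of length $8$ containing too few atoms of length $2$, and contradicts $|UV|\le 2\,\mathsf D(C_3^5)=22$. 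This step is of an entirely different nature from the $(\mathsf D,\Delta_1)$ bookkeeping in your proposal and is not something your outline anticipates or could absorb; as written, the proposal therefore has a genuine gap at exactly the one pair of groups that makes the proposition nontrivial.
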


\begin{proof}
Suppose that $G'$ is a finite abelian group with $\mathcal L (G)=\mathcal L (G')$. Then $\mathsf D (G) = \mathsf D (G')$. If $\mathsf D (G) \in [4,10]$, then the assertion follows from \cite[Theorem 6.2]{Sc09c}.

Suppose that $\mathsf D (G)=\mathsf D (G')=11$. If $\mathsf r (G) \le 2$ or $\mathsf r (G') \le 2$, then the assertion follows from \cite[Theorem 1.1]{Ge-Sc16a}. If $G$ or $G'$ is an elementary $2$-group, then the assertion follows from \cite[Theorem 7.3.3]{Ge-HK06a}.

Thus we suppose that $\mathsf r (G) \ge 3$, $\mathsf r (G') \ge 3$,  $\exp (G) \in [3,8]$, and $\exp (G') \in [3, 8]$. If $G \cong C_4^3$ or $G' \cong C_4^3$, then the assertion follows from \cite[Theorem 4.1]{Ge-Sc16a}.  Thus we may suppose that all this is not the case. Since there is no finite abelian group $H$ with $\mathsf D (H)=11$ and $\exp (G) \in \{5,7\}$, it remains to consider the following groups:
\[
C_2^4 \oplus C_4^2, \  C_2^7 \oplus C_4, \
 \   C_2^r \oplus C_6, \  C_2^3 \oplus C_8, \ C_3^5 \quad \text{for some} \ r \in \N \,.
\]
By \cite[Corollary 2]{Ge-Sc92}, $\mathsf D (C_2^r \oplus C_6)= \mathsf D^* (G)=r+6$ if and only if $r \le 3$. Thus $\mathsf D (C_2^4 \oplus C_6) \ge 11$, and this is the only group for which $\mathsf D (C_2^r \oplus C_6)=11$ is possible. Thus we have to consider
\[
G_1= C_2^4 \oplus C_4^2, \  G_2=C_2^7 \oplus C_4, \
 \  G_4= C_2^4 \oplus C_6, \  G_5= C_2^3 \oplus C_8, \ \text{and} \ G_6=C_3^5  \,.
\]
Since $\max \Delta^* (G_1)=5$, $\max \Delta^* (G_2)=7$,  $\max \Delta^* (G_4)=4$,  $\max \Delta^* (G_5)=6$, and $\max \Delta^* (G_6)= 4$, it remains to show that $\mathcal L (C_2^4 \oplus C_6) \ne \mathcal L (C_3^5)$. Note that Proposition \ref{2.2} implies that $\Delta^* (C_2^4 \oplus C_6) = [1,4] = \Delta^* (C_3^5)$. By \cite[Theorem 6.6.2]{Ge-HK06a}, it follows that $\{2, 8\} \in \mathcal L (C_2^4 \oplus C_6)$, and we assert that $\{2,8\} \notin \mathcal L (C_3^5)$.

Assume to the contrary that $\{2,8\}\in \mathcal L(C_3^5)$. Then there exists $U,V\in \mathcal A(C_3^5)$ such that $\mathsf L(UV)=\{2,8\}$. We choose the pair $(U,V)$ such that $|U|$ is maximal and observe that $11 \ge |U| \ge |V| \ge 8$.
There exists an element $g \in G$ such that $g\t U$ and $-g\t V$. Then $\mathsf v_g(U)\le 2$ and $\mathsf v_{-g}(V)\le 2$. If $\mathsf v_g(U)=\mathsf v_{-g}(V)=2$, then $gV(-g)^{-2}$, $(-g)Ug^{-2}$ and $g(-g)$ are atoms and hence $3\in \mathsf L(UV)$, a contradiction. Therefore $\mathsf v_g(U)+\mathsf v_{-g}(V) \in [2, 3]$ and we set
\begin{equation}\label{eq2}
\{g\in \supp(U)\mid \mathsf v_g(U)+\mathsf v_{-g}(V)= 3\}=\{g_1,\ldots,g_{\ell}\} \quad \text{ where} \quad \ell\in \N_0 \,.
\end{equation}
We continue with the following assertion.

\begin{enumerate}
\item[{\bf A1.}\,] For each $i\in [1,\ell]$ we have $\mathsf v_{g_i}(U)=2$.
\end{enumerate}

{\it Proof of \,{\bf A1}}.\,
Assume to the contrary that there is an $i\in [1,\ell]$ with $\mathsf v_{g_i}(U)=1$. Then $g_iV((-g_i)^2)^{-1}$ is an atom and $(-g_i)^2Ug_i^{-1}$ is an atom or a product of two atoms. Since $3\notin \mathsf L(UV)$, we obtain that $(-g_i)^2Ug_i^{-1}$ is an atom but $|(-g_i)^2Ug_i^{-1}|>|U|$, a contradiction to our choice of $|U|$.  \qed{(Proof of {\bf A1})}

Now we set
\[
U'=(-g_1) \cdot \ldots \cdot (-g_{\ell})U(g_1^2 \cdot \ldots \cdot g_{\ell}^2)^{-1} \quad \text{and} \quad V'=g_1^2 \cdot \ldots \cdot g_{\ell}^2 V ((-g_1) \cdot \ldots \cdot (-g_{\ell}))^{-1}\,.
\]
Using the above argument repeatedly we infer that  $U'$ and $V'$ are atoms. Clearly, we have $\mathsf L (U'V')=\mathsf L (UV) = \{2, 8\}$ whence $|V|+\ell= |V'|\le |U|$ and thus $\ell \le 3$.
We consider a factorization
\[
UV=W_1\cdot \ldots \cdot W_8\,,
\]
where $W_1, \ldots, W_8 \in \mathcal A(C_3^5)$  such that $|\{i\in [1,8]\mid |W_i|=2\}|$ is maximal under all factorization of $UV$ of length $8$. We set $U=U_1 \cdot \ldots \cdot U_8$,  $V=V_1 \cdot \ldots \cdot V_8$ such that $W_i=U_iV_i$ for each $i\in [1,8]$, and we define $W=\sigma(U_1) \cdot \ldots \cdot \sigma(U_8)$.
We continue with a second assertion.

\begin{enumerate}
\item[{\bf A2.}\,]
There exist disjoint non-empty subsets  $I,J,K\subset [1,8]$ such that
\[
I\cup J\cup K=[1,8] \quad \text{ and} \quad  \sigma \big( \prod_{i\in I}U_i \big) =\sigma \big( \prod_{j\in J}U_j \big) = \sigma \big( \prod_{k\in K}U_k \big) \,.
\]
\end{enumerate}

{\it Proof of \,{\bf A2}}.\, First we suppose that  $\mathsf h(W)\ge 2$, say $\sigma (U_1)=\sigma (U_2)$. Then $I=\{1\}$, $J= \{2\}$, and $K=[3,8]$ have the required properties.
Now suppose that $\mathsf h(W)=1$. Since the tuple $(\sigma(U_1),\ldots, \sigma(U_7))$ is not independent and the sequence $\sigma(U_1) \cdot \ldots \cdot \sigma(U_7)$ is zero-sum free,  there exist disjoint non-empty subset $I,J\subset [1,8]$, such that $\sum_{i\in I} \sigma (U_i) =\sum_{j\in J} \sigma (U_j)$. Therefore, $I$, $J$, and  $K=[1,8]\setminus (I\cup J)$ have the required properties.
\qed{(Proof of {\bf A2})}

We define
\[
X_1=\prod_{i\in I}U_i\,, X_2=\prod_{j\in J}U_j\,, \text{ and } X_3=\prod_{k\in K}U_k\,,
\]
\[
Y_1=\prod_{i\in I}V_i\,, Y_2=\prod_{j\in J}V_j\,, \text{ and } Y_3=\prod_{k\in K}V_k\,.
\]
By construction, we have $X_1Y_1=\prod_{i\in I} W_i$, $X_2Y_2=\prod_{j \in J} W_j$, $X_3Y_3=\prod_{k \in K} W_k$, $\sigma (X_1)=\sigma (X_2)=\sigma (X_3)$, $\sigma (Y_1)=\sigma (Y_2)=\sigma (Y_3)$, and hence $X_iY_j \in \mathcal B (G)$ for all $i, j \in [1,3]$.

We choose a factorization of $X_1Y_2$, a factorization of $X_2Y_3$, and a factorization of $X_3Y_1$, and their product gives rise to a factorization of $UV$, say $UV = W_1' \cdot \ldots \cdot W_8'$, where all the $W_i'$ are atoms, and we denote by $t_1$ the number of $W_i'$ having length two.
Similarly, we choose a factorization of
$X_1Y_3$, a factorization of $X_2Y_1$, and a factorization of $X_3Y_2$, obtain a factorization of  $UV$, and we  denote by $t_2$  the number of atoms of length $2$ in this  factorization.
If $g \in G$ and $i, j \in [1,3]$ are distinct with $g(-g) \t X_i Y_j$ and $g \t X_i$, then the choice of the factorization $UV = W_1 \cdot \ldots \cdot W_8$ implies that $g(-g) \t X_i Y_i$ or $g(-g) \t X_j Y_j$ whence $\mathsf v_g (U)+\mathsf v_{-g}(V)\ge 3$. Therefore Equation (\ref{eq2}) implies  that $g \in \{g_1, \ldots, g_{\ell} \}$ whence $t_1+t_2 \le \ell \le 3$, and we may suppose that $t_1 \le 1$. Therefore we infer that
\[
2+ 3 \times 7 \le \sum_{i=1}^8 |W_i'| = |UV| \le 2 \mathsf D (C_3^5) = 22 \,,
\]
a contradiction.
\end{proof}

\providecommand{\bysame}{\leavevmode\hbox to3em{\hrulefill}\thinspace}
\providecommand{\MR}{\relax\ifhmode\unskip\space\fi MR }
\providecommand{\MRhref}[2]{%
  \href{http://www.ams.org/mathscinet-getitem?mr=#1}{#2}
}
\providecommand{\href}[2]{#2}

\end{document}